\subjclass{Primary 52C07; %Lattices and convex bodies in $n$ dimensions (aspects of discrete geometry)
	Secondary 26D15,  	%Inequalities for sums, series and integrals
	52A40%Inequalities and extremum problems involving convexity in convex geometry
}
\newcommand{\R}{\mathbb R}
\newcommand{\N}{\mathbb N}
\newcommand{\Z}{\mathbb Z}
\newcommand{\E}{\mathbb E}
\newcommand{\Pro}{\mathbb P}
\newcommand\subs[2]{\genfrac{}{}{0pt}{}{#1}{#2}}
\newtheorem{thm}{Theorem}[section]
\newtheorem{cor}{Corollary}[section]
\newtheorem{lemma}{Lemma}[section]
\theoremstyle{remark}
\newtheorem{rmk}{Remark}
\numberwithin{equation}{section}
\begin{document}

%%%%%%%%%%%%%%%%%%%%%%%%%%%%%%%%%%%%%%%%%%%%%5

\title[Discrete Borell's inequality and mean width of random polytopes]{Borell's inequality and mean width of random polytopes via discrete inequalities}

\author[D.\,Alonso]{David Alonso-Guti\'errez}
\address{\'Area de an\'alisis matem\'atico, Departamento de matem\'aticas, Facultad de Ciencias, Universidad de Zaragoza, Pedro Cerbuna 12, 50009 Zaragoza (Spain), IUMA}
\email[(David Alonso)]{alonsod@unizar.es}

\author[L.C. Garc\'ia-Lirola]{Luis C. Garc\'ia-Lirola}
\address{\'Area de an\'alisis matem\'atico, Departamento de matem\'aticas, Facultad de Ciencias, Universidad de Zaragoza, Pedro Cerbuna 12, 50009 Zaragoza (Spain), IUMA}
\email[(Luis C. Garc\'ia-Lirola)]{luiscarlos@unizar.es}

\thanks{The authors are partially supported by Project PID2022-137294NB-I00  funded by   MICIU/AEI/10.13039/501100011033/ and FEDER Una manera de hacer Europa, and by DGA Project E48\_23R. The second named author is also supported by Project PID2021-122126NB-C32 funded by   MICIU/AEI/10.13039/501100011033/ and FEDER Una manera de hacer Europa, Project 219551/PI/22 funded by Fundación Séneca - ACyT Región de Murcia, and by Generalitat Valenciana Grant CIGE/2022/97. 
}
\begin{abstract}
Borell's inequality states the existence of a positive absolute constant $C>0$ such that for every $1\leq p\leq q$
$$
\left(\E|\langle X, e_n\rangle|^p\right)^\frac{1}{p}\leq\left(\E|\langle X, e_n\rangle|^q\right)^\frac{1}{q}\leq C\frac{q}{p}\left(\E|\langle X, e_n\rangle|^p\right)^\frac{1}{p},
$$
whenever $X$ is a random vector uniformly distributed on any convex body $K\subseteq\R^n$ and $(e_i)_{i=1}^n$ is the standard canonical basis in $\R^n$. In this paper, we will prove a discrete version of this inequality, which will hold whenever $X$ is a random vector uniformly distributed on $K\cap\Z^n$ for any convex body $K\subseteq\R^n$ containing the origin in its interior. We will also make use of such discrete version to obtain discrete inequalities from which we can recover the estimate $\E w(K_N)\sim w(Z_{\log N}(K))$ for any convex body $K$ containing the origin in its interior, where $K_N$ is the centrally symmetric random polytope $K_N=\textrm{conv}\{\pm X_1,\ldots,\pm X_N\}$ generated by independent random vectors uniformly distributed on $K$, $Z_{p}(K)$ is the $L_p$-centroid body of $K$ for any $p\geq1$, and $w(\cdot)$ denotes the mean width.
\end{abstract}

\date{\today}
\maketitle

\section{Introduction and Notation}

Given a convex body $K\subseteq\R^n$ and $p\geq1$, its $L_p$-centroid body, $Z_p(K)$, is defined as the convex body with support function
$$
h_{Z_p(K)}(y)=\left(\E|\langle X,y\rangle|^p\right)^\frac{1}{p}=\left(\frac{1}{|K|}\int_K|\langle x,y\rangle|^pdx\right)^\frac{1}{p}\quad\forall y\in\R^n,
$$
where $X$ is a random vector uniformly distributed on $K$, $\E Y$ denotes the expectation of a random variable $Y$, and $|K|$ denotes the volume of $K$ (i.e., its $n$-dimensional Lebesgue measure). Let us recall that, given any convex body $L\subseteq\R^n$, its support function $h_L$ is defined as $\displaystyle{h_L(y):=\sup_{x\in L}\langle x,y\rangle}$ for every $y\in\R^n$ and that every sublinear function $f:\R^n\to\R$ is the support function of a unique convex body (see \cite[Thm 1.7.1]{Sch}). These bodies were introduced, under a different normalization, in \cite{LYZ}. Since then, they have shown to play a crucial role on the distribution of the mass on convex bodies (see e.g. \cite{FGP, GSTV, P}).

By H\"older's inequality, for any convex body $K\subseteq\R^n$ and any $1\leq p\leq q$, if $X$ is a random vector uniformly distributed on $K$ we have that for any $\theta\in S^{n-1}$, the Euclidean sphere in $\R^n$,
\begin{equation}\label{eq:HolderDirection}
h_{Z_p(K)}(\theta)=\left(\E|\langle X,\theta\rangle|^p\right)^\frac{1}{p}\leq \left(\E|\langle X,\theta\rangle|^q\right)^\frac{1}{q}=h_{Z_q(K)}(\theta).
\end{equation}
Equivalently,
\begin{equation}\label{eq:Holder}
Z_p(K)\subseteq Z_q(K).
\end{equation}

As a consequence of Borell's inequality (see \cite[Appendix III]{MS} and \cite[Thm. 2.4.6 and Rmk. 2.4.8]{BGVV}), we have that there exists an absolute constant $C>0$ such that for any convex body $K\subseteq\R^n$ and any $\theta\in S^{n-1}$, if $X$ is a random vector uniformly distributed on $K$, then for any $1\leq p\leq q$,
\begin{equation}\label{eq:ContinuousBorell}
\left(\E|\langle X,\theta\rangle|^q\right)^\frac{1}{q}\leq C\frac{q}{p}\left(\E|\langle X,\theta\rangle|^p\right)^\frac{1}{p}.
\end{equation}
Equivalently, there exists an absolute constant $C>0$ such that for any convex body $K\subseteq\R^n$, and for any $1\leq p\leq q$,
\begin{equation}\label{eq:BorellCentroidBodies}
Z_q(K)\subseteq C\frac{q}{p}Z_p(K).
\end{equation}
Writing equations \eqref{eq:Holder} and \eqref{eq:BorellCentroidBodies} together, we have that there exists an absolute constant $C>0$ such that for any convex body $K\subseteq\R^n$ and any $1\leq p\leq q$,
\begin{equation}\label{eq:HolderAndBorell}
Z_p(K)\subseteq Z_q(K)\subseteq C\frac{q}{p} Z_p(K).
\end{equation}
Let us stress out that the constant $C$ in the equations \eqref{eq:ContinuousBorell} to \eqref{eq:HolderAndBorell} is a positive absolute constant, independent of the dimension $n$, of the convex body $K\subseteq\R^n$, and of the parameters $1\leq p\leq q$. Let us also point out that the fact that \eqref{eq:HolderDirection} and \eqref{eq:ContinuousBorell} hold for every convex body $K$ and every $\theta\in S^{n-1}$ is equivalent to the fact that they hold for every convex body $K\subseteq\R^n$ and $\theta=e_n$, where  $(e_i)_{i=1}^n$ is the canonical basis in $\R^n$. That is, for every convex body $K\subseteq\R^n$ and $1\leq p\leq q$, if $X$ is a random vector uniformly distributed on $K$ then
\begin{equation}\label{eq:ContinuousBorellOneDirection}
\left(\E|\langle X,e_n\rangle|^p\right)^\frac{1}{p}\leq\left(\E|\langle X,e_n\rangle|^q\right)^\frac{1}{q}\leq C\frac{q}{p}\left(\E|\langle X,e_n\rangle|^p\right)^\frac{1}{p}.
\end{equation}

Indeed, given  $K\subseteq\R^n$ a convex body and $\theta\in S^{n-1}$, there exists an orthogonal map $U\in O(n)$ such that $U^t e_n=\theta$, where $U^t$ denotes the transpose matrix of $U$. Thus, if $X$ is a random vector uniformly distributed on $K$, then for any $p\geq1$
$$
\left(\E|\langle X,\theta\rangle|^p\right)^\frac{1}{p}=\left(\E|\langle X,U^te_n\rangle|^p\right)^\frac{1}{p}=\left(\E|\langle UX,e_n\rangle|^p\right)^\frac{1}{p}=\left(\E|\langle Y,e_n\rangle|^p\right)^\frac{1}{p},
$$
where $Y$ is a random vector uniformly distributed on $UK$. Therefore, applying \eqref{eq:ContinuousBorellOneDirection} to the convex body $UK$, we obtain \eqref{eq:HolderDirection} and \eqref{eq:ContinuousBorell}.

The main purpose of this paper is to obtain a discrete version of \eqref{eq:ContinuousBorellOneDirection}, in which the random vector $X$ is uniformly distributed on the intersection of a convex body $K\subseteq\R^n$ with the integer lattice $\Z^n$, following an active line of research whose purpose is to obtain discrete analogues of classical results in convex geometry (see, for instance, the discrete versions of the Brunn-Minkowski inequality proved by Gardner and Zhang in \cite{GG}, Koldobsky's slicing inequality \cite{AHZ}, or Meyer's inequality \cite{FH}).

In \cite{IYNZ}, the authors obtained a discrete Brunn-Minkowski inequality for the lattice point enumerator $G_n(\cdot)$, which is defined as the cardinality of the intersection of a set in $\R^n$ with $\Z^n$ (see Theorem \ref{thm: BM_lattice_point_no_G(K)G(L)>0} below). Such version of the Brunn-Minkowski inequality allows to recover the classical one (see \cite[Theorem 7.1.1]{Sch}), and has been very useful to obtain some other discrete versions of inequalities in convex geometry (see, for instance, \cite{ILY} for a discrete version of an isoperimetric inequality or \cite{ALY} for discrete versions of Rogers-Shephard type inequalities and related inequalities). Due to the fact that adding an open unit cube in the large term in this discrete version of the Brunn-Minkowski inequality is necessary, typically an extra open unit cube appears in some terms in these new discrete inequalities. Nevertheless, one can still deduce their continuous versions from them.

In this paper, we will mainly consider convex bodies $K\subseteq\R^n$ with $0\in\operatorname{int}K$, the interior of $K$, and $\displaystyle{\max_{x\in K\cap\Z^n}|\langle x, e_n\rangle|\geq1}$. Such condition will ensure that $K\cap\Z^n\neq\emptyset$ (i.e., $G_n(K)\neq0)$ and that if $X$ is a random vector uniformly distributed on $K\cap\Z^n$, then $\E|\langle X, e_n\rangle|^p\neq 0$ for every $p\geq1$. Besides, for any convex body $K\subseteq\R^n$ with $0\in\operatorname{int}K$,  we have that $\lambda UK$ will satisfy such condition for any orthogonal transformation $U$, if $\lambda>0$ is large enough, which will allow to recover continuous inequalities from discrete ones for convex bodies containing the origin in its interior.

Unfortunately, given $1\leq p\leq q$ and $n\in \mathbb N$, it is not possible to find $C(p,q,n)>0$ such that $\left(\E|\langle X, e_n\rangle|^q\right)^\frac{1}{q}\leq C(p,q,n)\left(\E|\langle X, e_n\rangle|^p\right)^\frac{1}{p}$ for every convex body $K\subset \mathbb R^n$ with $0\in \operatorname{int} K$ and $\displaystyle{\max_{x\in K\cap\Z^n}|\langle x, e_n\rangle|\geq1}$. This can be easily checked by considering $K= \operatorname{conv}(\{(x,-1/2)\in \mathbb R^{n-1}\times \mathbb R
	: \|x\|_\infty\leq \lambda\}\cup\{e_n\})$ and letting $\lambda\to\infty$, where $\operatorname{conv}(A)$ denotes the convex hull of the set $A\subseteq\R^n$. However, as a direct consequence of the fact that, if $1\leq p\leq q$, then the $\ell^q$-norm is bounded by the $\ell^p$-norm in $\R^N$ for any $N\in\N$, we have that for every convex body $K\subseteq\R^n$ with $G_n(K)\neq0$, then
$$
\left(\E|\langle X, e_n\rangle|^q\right)^\frac{1}{q}\leq G_n(K)^{\frac{1}{p}-\frac{1}{q}}(\E|\langle X,e_n\rangle|^p)^\frac{1}{p}.
$$
Therefore, if we fix $1\leq p\leq q$ and consider convex bodies such that $G_n(K)$ is bounded above by $C\frac{q}{p}$ with $C$ an absolute constant, we have that $G_n(K)^{\frac{1}{p}-\frac{1}{q}}\leq G_n(K)\leq C\frac{q}{p}$, and for such family of convex bodies we trivially obtain the following discrete version of Borell's inequality:
\begin{equation}\label{eq:DiscreteBorellSmallK}
\left(\E|\langle X, e_n\rangle|^q\right)^\frac{1}{q}\leq C\frac{q}{p}(\E|\langle X,e_n\rangle|^p)^\frac{1}{p}.
\end{equation}

The following theorem, and its equivalent form given by Theorem \ref{thm:DiscreteBorell} below, gives a discrete version of Borell's inequality which also holds whenever $G_n(K)$ is not bounded. As we will see, when applied to $\lambda K$ for a fixed convex body $K\subseteq\R^n$ with the origin in its interior and taking $\lambda\to\infty$, we will recover the continuous version of Borell's inequality. Notice that in such case $G_n(\lambda K)$ will tend to infinity and the estimate given by \eqref{eq:DiscreteBorellSmallK} cannot be applied.

\begin{thm}\label{thm:DiscreteBorellNew}

There exists an absolute constant $C>0$ such that for any convex body $K\subseteq\R^n$ with $G_n(K)\neq 0$ and any $1\leq p\leq q$
\begin{equation}\label{eq:DiscreteBorellNew}
	\left(\E|\langle X, e_n\rangle|^q\right)^\frac{1}{q}\leq C\frac{q}{p}\left(1+(\E|\langle Y,e_n\rangle|^p)^\frac{1}{p}\left(\frac{G_n(K+(-1,1)^n)}{G_n(K)}\right)^\frac{1}{p}\right),
\end{equation}
where $X$ is a random vector uniformly distributed on $K\cap \mathbb Z^n$, and $Y$ is a random vector uniformly distributed on $(K+(-1,1)^n)\cap \Z^n$.

\end{thm}

Let us introduce the following notation, which will allow us to express the right-hand side of Theorem \ref{thm:DiscreteBorellNew} in terms of the random vector $X$ uniformly distributed on $K\cap\Z^n$ and, studying the asymptotic behavior of the quantity introduced here, prove that it implies \eqref{eq:ContinuousBorell} whenever $0\in\operatorname{int}K$: For any convex body $K\subseteq\R^n$ such that $\displaystyle{\max_{x\in K\cap\Z^n}|\langle x, e_n\rangle|\geq1}$ and any $p\geq 1$, we define $D_0(K,p)$ as the number given by
$$
D_0(K,p):=\frac{\left(1+(\E|\langle Y,e_n\rangle|^p)^\frac{1}{p}\left(\frac{G_n(K+C_n)}{G_n(K)}\right)^\frac{1}{p}\right)}{\left(\E|\langle X, e_n\rangle|^p\right)^\frac{1}{p}},
$$
where $C_n=(-1,1)^n$, $G_n(K)$ is the cardinality of $K\cap \mathbb Z^n$, $X$ is a random vector uniformly distributed on $K\cap\Z^n$, and $Y$ is a random vector uniformly distributed on $(K+C_n)\cap \Z^n$. With this notation, and taking into account H\"older's inequality, we can rewrite Theorem \ref{thm:DiscreteBorellNew} as follows in the case that $\displaystyle{\max_{x\in K\cap\Z^n}|\langle x, e_n\rangle|\geq1}$ (otherwise, Theorem \ref{thm:DiscreteBorellNew} is trivial):

\begin{thm}\label{thm:DiscreteBorell}
There exists an absolute constant $C>0$ such that for any convex body $K\subseteq\R^n$ with  $\displaystyle{\max_{x\in K\cap\Z^n}|\langle x, e_n\rangle|\geq1}$, and any $1\leq p\leq q$
\begin{equation}\label{eq:DiscreteBorell}
\left(\E|\langle X, e_n\rangle|^p\right)^\frac{1}{p}\leq\left(\E|\langle X, e_n\rangle|^q\right)^\frac{1}{q}\leq C\frac{q}{p}D_0(K,p)\left(\E|\langle X, e_n\rangle|^p\right)^\frac{1}{p},
\end{equation}
where $X$ is a random vector uniformly distributed on $K\cap \mathbb Z^n$.
\end{thm}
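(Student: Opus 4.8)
The left-hand inequality in \eqref{eq:DiscreteBorell} is simply the monotonicity of $L_p$-norms on the probability space $(K\cap\Z^n,\textrm{uniform})$, applied to the function $x\mapsto|\langle x,e_n\rangle|$ (Jensen's inequality); it holds for every $X$ and needs no hypothesis on $K$. The whole point is the right-hand inequality, and the plan is to reduce it to the continuous inequality \eqref{eq:ContinuousBorellOneDirection} applied to the thickened body $K+C_n$. To pass to the continuous world, let $U$ be uniform on $[-\tfrac12,\tfrac12]^n$ and independent of $X$, and set $W=X+U$. Then $W$ is uniform on $E:=\bigcup_{z\in K\cap\Z^n}\bigl(z+[-\tfrac12,\tfrac12]^n\bigr)$, a union of cubes with pairwise disjoint interiors, so $\Vol(E)=G_n(K)$ and $E\subseteq K+[-\tfrac12,\tfrac12]^n\subseteq K+C_n$. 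Since $\langle U,e_n\rangle$ is uniform on $[-\tfrac12,\tfrac12]$, the triangle inequality in $L_q$ gives
\[
\bigl(\E|\langle X,e_n\rangle|^q\bigr)^{1/q}\le\bigl(\E|\langle W,e_n\rangle|^q\bigr)^{1/q}+\tfrac12 ,
\]
and, bounding the uniform measure on $E$ by the uniform measure on $K+C_n$,
\[
\E|\langle W,e_n\rangle|^q=\frac{1}{G_n(K)}\int_E|\langle x,e_n\rangle|^q\,dx\le\frac{\Vol(K+C_n)}{G_n(K)}\,\E|\langle V,e_n\rangle|^q ,
\]
where $V$ is uniform on $K+C_n$.

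Next I would apply \eqref{eq:ContinuousBorellOneDirection} to $K+C_n$, which has the origin in its interior, obtaining $(\E|\langle V,e_n\rangle|^q)^{1/q}\le C\frac qp(\E|\langle V,e_n\rangle|^p)^{1/p}$. Because $E\subseteq K+C_n$ we have $\Vol(K+C_n)/G_n(K)\ge1$, so the exponent $1/q$ on this ratio may be replaced by $1/p$ using $q\ge p$; and the additive $\tfrac12$ is absorbed into $C\frac qp\cdot1$. Collecting these steps and writing $(\Vol(K+C_n)/G_n(K))^{1/p}(\E|\langle V,e_n\rangle|^p)^{1/p}=\bigl(\tfrac1{G_n(K)}\int_{K+C_n}|\langle x,e_n\rangle|^p\,dx\bigr)^{1/p}$, one reaches
\[
\bigl(\E|\langle X,e_n\rangle|^q\bigr)^{1/q}\le C\frac qp\Bigl(1+\Bigl(\frac{1}{G_n(K)}\int_{K+C_n}|\langle x,e_n\rangle|^p\,dx\Bigr)^{1/p}\Bigr).
\]

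What remains is to bound the continuous $p$-th moment of $K+C_n$ by the discrete $p$-th moment of $(K+C_n)\cap\Z^n$. Since $C_0(K,p)\,(\E|\langle X,e_n\rangle|^p)^{1/p}=1+\bigl(\tfrac1{G_n(K)}\sum_{z\in(K+C_n)\cap\Z^n}|\langle z,e_n\rangle|^p\bigr)^{1/p}$, it suffices to establish an \emph{absolute-constant} estimate
\[
\int_{K+C_n}|\langle x,e_n\rangle|^p\,dx\le C'^{\,p}\Bigl(G_n(K)+\sum_{z\in(K+C_n)\cap\Z^n}|\langle z,e_n\rangle|^p\Bigr),
\]
after which elementary manipulations using $(a+b)^{1/p}\le a^{1/p}+b^{1/p}$ finish \eqref{eq:DiscreteBorell} with an absolute constant. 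This is the technical core and, I expect, the main obstacle: a naive covering of $K+C_n$ by the enlarged cubes $z+(-\tfrac32,\tfrac32)^n$, $z\in(K+C_n)\cap\Z^n$, loses a factor $3^n$ and is useless. The way around it is to exploit the thickness of $K+C_n$: it contains the cube $x+C_n$ around \emph{every} $x\in K$, whence (rounding the relevant $x$) each point of $K+C_n$ lies within $\ell^\infty$-distance $<\tfrac32$ of a lattice point of $(K+C_n)\cap\Z^n$, while at the same time $K\subseteq\bigcup_{z\in(K+C_n)\cap\Z^n}\bigl(z+[-\tfrac12,\tfrac12]^n\bigr)$. Partitioning $K+C_n$ into unit cells attached to the lattice points of $K+C_n$ and estimating $|\langle x,e_n\rangle|$ on a cell by $|\langle z,e_n\rangle|+O(1)$ should yield the inequality with an absolute constant; it is here that the hypothesis $\max_{x\in K\cap\Z^n}|\langle x,e_n\rangle|\ge1$ enters, ruling out the degenerate ``flat'' configurations of $K$ for which such a bound genuinely fails (and which also make $C_0(K,p)$ ill-defined).
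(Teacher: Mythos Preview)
Your reduction to the continuous Borell inequality on $K+C_n$ is clean and steps 1--6 are correct. The gap is exactly where you say it is, and it is not closed by your sketch. Your proposed fix, ``partitioning $K+C_n$ into unit cells attached to the lattice points of $K+C_n$ and estimating $|\langle x,e_n\rangle|$ on a cell by $|\langle z,e_n\rangle|+O(1)$'', is the same naive covering you already rejected: if the cells are unit cubes $z+[-\tfrac12,\tfrac12)^n$, then the relevant $z$ need not lie in $(K+C_n)\cap\Z^n$ (only in $K+(-\tfrac32,\tfrac32)^n$), and reassigning them to nearby lattice points of $K+C_n$ costs exactly the exponential multiplicity you wanted to avoid. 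Slicing in the $e_n$ direction does not save this either: for a single slice $(K+C_n)_m=A_m+(-1,1)^{n-1}$ one has, e.g.\ when $A_m$ is a single generic point, $|(K+C_n)_m|=2^{n-1}$ while $G_{n-1}((K+C_n)_m)=1$, so there is no dimension-free bound $|L_t|\le C\,G_{n-1}(L_m)$ to integrate. The hypothesis $\max_{x\in K\cap\Z^n}|\langle x,e_n\rangle|\ge1$ does not obviously repair this. So as written the proposal is incomplete at its declared ``technical core''.

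There is also a structural mismatch with the paper. The paper proves Theorem~\ref{thm:DiscreteBorell} directly in the discrete setting, using the discrete Brunn--Minkowski inequality (Theorem~\ref{thm: BM_lattice_point_no_G(K)G(L)>0}) to obtain an exponential tail bound $\Pro(|\langle X,e_n\rangle|>ts(\E|\langle X,e_n\rangle|^p)^{1/p})\le 2^{-pt/2}$ for a suitable $s\sim C_0(K,p)$, and then integrates the tail. The continuous Borell inequality \eqref{eq:ContinuousBorellOneDirection} is derived \emph{afterwards} (Corollary~\ref{cor:Continuous Borell}) as a limit of the discrete one. Your route runs in the opposite direction---using the continuous inequality as an input---so even if step~7 were patched, the argument would not serve the paper's purpose of obtaining the continuous result from the discrete one.
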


In order to study the asymptotic behavior of the the number $D_0(\lambda K, p)$ as $\lambda\to\infty$ and recover continuous inequalities from their discrete versions, we also define, for any convex body $K\subseteq\R^n$ such that the Euclidean closed unit ball, $B_2^n$, satisfies $B_2^n\subseteq K$, and any $q\geq1$, $D_{q}(K)$ as the following number
$$
D_{q}(K):=\sup_{\subs{U\in O(n)}{1\leq p\leq q}}D_0(UK,p).
$$
Let us point out that the condition $B_2^n\subseteq K$ in this definition is imposed so that, for every $U\in O(n)$, $e_n\in UK\cap\Z^n$  and then for any $p\geq 1$ and any $U\in O(n)$, $D_0(UK,p)$ is well defined.

The following lemma, involving the role of the number $D_q(\cdot)$, will allow us to recover \eqref{eq:ContinuousBorellOneDirection} from Theorem \ref{thm:DiscreteBorell} (with the same value of the constant $C$), as well as some other continuous inequalities from discrete ones:
\begin{lemma}\label{lem:LimitC}
For any convex body $K\subseteq\R^n$ such that $0\in\operatorname{int}K$ and any $q\geq 1$ we have
$$
\lim_{\lambda\to\infty}D_{q}(\lambda K)=1.
$$
\end{lemma}

\begin{rmk}\label{rmk:LowerboundC0}
Let us point out that for every convex body $K\subseteq\R^n$ such that $\displaystyle{\max_{x\in K\cap\Z^n}|\langle x, e_n\rangle|\geq1}$ and any $p\geq 1$ we have that $D_0(K,p)\geq 1$. Indeed, if $X$ and $Y$ are random vectors uniformly distributed on $K\cap\Z^n$ and $(K+C_n)\cap\Z^n$ respectively,
\begin{eqnarray*}
D_0(K,p)
&\geq&\frac{\left((\E|\langle Y,e_n\rangle|^p)^\frac{1}{p}\left(\frac{G_n(K+C_n)}{G_n(K)}\right)^\frac{1}{p}\right)}{\left(\E|\langle X, e_n\rangle|^p\right)^\frac{1}{p}}\cr
&=&\left(\frac{\sum_{x\in (K+C_n)\cap\Z^n}|\langle x, e_n\rangle|^p}{\sum_{x\in K\cap\Z^n}|\langle x, e_n\rangle|^p}\right)^\frac{1}{p}\geq\left(\frac{\sum_{x\in K\cap\Z^n}|\langle x, e_n\rangle|^p}{\sum_{x\in K\cap\Z^n}|\langle x, e_n\rangle|^p}\right)^\frac{1}{p}=1.
\end{eqnarray*}
For every convex body $K\subseteq\R^n$ such that $0\in\operatorname{int}K$, there exists $\lambda_0>0$ such that $B_2^n\subseteq \lambda_0 K$. Then, for every $\lambda\geq\lambda_0$, $U\in O(n)$, and $p\geq 1$, choosing $q\geq p$, we have that
$$
1\leq D_0(\lambda UK, p)\leq\sup_{\subs{U\in O(n)}{1\leq p\leq q}}D_0(\lambda UK,p)=D_q(\lambda K).
$$
Therefore, Lemma \ref{lem:LimitC} implies that for every $U\in O(n)$ and $p\geq 1$ we have $\displaystyle{\lim_{\lambda\to\infty}D_0(\lambda UK, p)=1}$.
\end{rmk}

Random polytopes are important objects of study in several areas in mathematics since Sylvester initiated their study with a problem posed in \textit{The Educational Times} in 1864 \cite{S}. This problem was originally ill-posed and the question was modified within a year, and became known as the ``four-point problem''. It were R\'enyi and Sulanke who, in their seminal papers \cite{RS}, \cite{RS2}, \cite{RS3}, focused on the expected volume of random polytopes as the number of points generating it tends to infinity, as well as on the vertex number in the planar case. Random polytopes appear in approximation theory \cite{B}, random matrix theory \cite{LPRT}, or in other disciplines such as statistics, information theory, signal processing, medical imaging or digital communications (see \cite{DT} and the references therein). Besides, since the work of Gluskin \cite{G}, they have been known to provide counterexamples to some conjectures, or examples to the sharpness of some estimates on convex bodies (see the survey \cite{MT} and its references, or \cite{GL} and \cite{LRT}), as the behavior of some parameters of a linear and geometric nature different from the known deterministic constructions.

Different quantities such as expectations, variances, distribution, concentration inequalities, central limit theorems or large deviation principles of several geometric functionals, such as volume, number of faces, or querma\ss integrals, associated to random polytopes are studied, which obviously depend on the random model from which the random polytope is generated (see, for instance, \cite{A}, \cite{AS}, \cite{BV}, \cite{DGT}, \cite{GT}, or \cite{HR}). Here we will focus on the mean width and the following model: Given a convex body $K\subseteq\R^n$ with $0\in\operatorname{int}K$, $N\geq n$, and $X_1,\ldots, X_N$ independent random vectors uniformly distributed on $K$, let $K_N=\operatorname{conv}\{\pm X_1,\dots, \pm X_N\}$ be the centrally symmetric random polytope generated by $X_1,\dots, X_N$. It is well known that there exist absolute constants $c_1, c_1^\prime,c_2, c_2^\prime$ such that for any convex body $K\subseteq\R^n$  with $0\in\operatorname{int}K$, and any $n\leq N\leq e^n$ we have
\begin{equation}\label{eq:MeanWidthContinuous}
c_1w(Z_{c_1^\prime\log N}(K))\leq \E w(K_N)\leq c_2w(Z_{c_2^\prime\log N}(K)),
\end{equation}
where for any convex body $L\subseteq\R^n$ containing the origin, $w(L)$ denotes its mean width
$$
w(L)=\int_{S^{n-1}}h_L(\theta)d\sigma(\theta),
$$
being $d\sigma$ the uniform probability measure on $S^{n-1}$. We refer the reader to \cite[Proposition 11.3.10]{BGVV} for a proof of the upper bound and to \cite[Theorem 11.3.2]{BGVV} for a proof of a stronger result that implies the lower bound when $N\geq n^2$, which relies on the continuous Borell's inequality given by \eqref{eq:ContinuousBorell}. The fact that the lower bound also holds when $n\leq N\leq n^2$ is probably folklore. Nevertheless, since we could not find it in the literature, we will include a proof, which relies on Borell's inequality as well, in Section \ref{subsec:ExpectedMeanWidthRandomPOlytopes}. We also refer the reader to \cite{AP} for the study of the expected value of the mean width of a large family of random convex sets, which include random polytopes generated by isotropic log-concave random vectors.

In view of the connection of Borell's inequality to the estimate of the mean width of random polytopes given by \eqref{eq:MeanWidthContinuous}, and the relation between Theorem \ref{thm:DiscreteBorell} and the continuous version of Borell's inequality, it is our purpose to explore the relation between Theorem \ref{thm:DiscreteBorell}, and discrete versions of \eqref{eq:MeanWidthContinuous}. However, since the inequality given by Theorem \ref{thm:DiscreteBorell} is only provided in the direction given by $\theta=e_n$, in order to explore such relation, we must rewrite \eqref{eq:MeanWidthContinuous} in a different way.

On the one hand, notice that, by uniqueness of the Haar measure, we have that if $d\nu$ denotes the Haar probability measure on the set of orthogonal matrices $O(n)$, then
\begin{eqnarray*}
w(K_N)&=&\int_{S^{n-1}}h_{K_N}(\theta)d\sigma(\theta)=\int_{S^{n-1}}\max_{1\leq i\leq N}|\langle X_i,\theta\rangle|d\sigma(\theta)\cr
&=&\int_{O(n)}\max_{1\leq i\leq N}|\langle X_i,U^te_n\rangle|d\nu(U)=\int_{O(n)}\max_{1\leq i\leq N}|\langle UX_i,e_n\rangle|d\nu(U).
\end{eqnarray*}
Therefore,
\begin{eqnarray*}
\E w(K_N)&=&\E\int_{O(n)}\max_{1\leq i\leq N}|\langle UX_i,e_n\rangle|d\nu(U)=\int_{O(n)}\E\max_{1\leq i\leq N}|\langle UX_i,e_n\rangle|d\nu(U)\cr
&=&\int_{O(n)}\frac{1}{|K|^N}\int_{K}\dots\int_K\max_{1\leq i\leq N}|\langle Ux_i,e_n\rangle|dx_N\dots dx_1d\nu(U)\cr
&=&\int_{O(n)}\frac{1}{|UK|^N}\int_{UK}\dots\int_{UK}\max_{1\leq i\leq N}|\langle x_i,e_n\rangle|dx_N\dots dx_1d\nu(U).\cr
\end{eqnarray*}
On the other hand, for any $p\geq1$ we have that
\begin{eqnarray*}
w(Z_p(K))&=&\int_{S^{n-1}}h_{Z_p(K)}(\theta)d\sigma(\theta)=\int_{S^{n-1}}\left(\frac{1}{|K|}\int_K|\langle x,\theta\rangle|^pdx\right)^\frac{1}{p}d\sigma(\theta)\cr
&=&\int_{O(n)}\left(\frac{1}{|K|}\int_K|\langle x,U^te_n\rangle|^pdx\right)^\frac{1}{p}d\nu(U)\cr
&=&\int_{O(n)}\left(\frac{1}{|K|}\int_K|\langle Ux,e_n\rangle|^pdx\right)^\frac{1}{p}d\nu(U)\cr
&=&\int_{O(n)}\left(\frac{1}{|UK|}\int_{UK}|\langle x,e_n\rangle|^pdx\right)^\frac{1}{p}d\nu(U).\cr
\end{eqnarray*}
Therefore, Equation \eqref{eq:MeanWidthContinuous} is equivalent to
\begin{align}\label{eq:MeanWidthRotations}
&c_1\int_{O(n)}\left(\frac{1}{|UK|}\int_{UK}|\langle x,e_n\rangle|^{c_1^\prime\log N}dx\right)^\frac{1}{c_1^\prime\log N}d\nu(U)\cr
&\leq\int_{O(n)}\frac{1}{|UK|^N}\int_{UK}\dots\int_{UK}\max_{1\leq i\leq N}|\langle x_i,e_n\rangle|dx_N\dots dx_1d\nu(U)\cr
&\leq c_2\int_{O(n)}\left(\frac{1}{|UK|}\int_{UK}|\langle x,e_n\rangle|^{c_2^\prime\log N}dx\right)^\frac{1}{c_2^\prime\log N}d\nu(U).
\end{align}
In this paper we will prove the following two theorems, which will provide discrete versions of the inequalities in \eqref{eq:MeanWidthRotations} where the role of the volume is played by the lattice point enumerator measure given by $G_n(L)$, the cardinality of the set $L\cap\Z^n$ for any Borel set $L\subseteq\R^n$. The two inequalities in \eqref{eq:MeanWidthRotations} and, equivalently, equation \eqref{eq:MeanWidthContinuous} will be recovered from the discrete versions as corollaries. Let us also point out that, as a consequence of \eqref{eq:HolderAndBorell}, inequality \eqref{eq:MeanWidthContinuous} can be equivalently written with constants $c_1^\prime=c_2^\prime=1$. Nevertheless, for our purposes of finding analogue discrete inequalities we decided not to do so.

\begin{thm}\label{thm:DiscreteUpperBoundMeanWidth}
There exists an absolute constant $C>0$ such that for every convex body $K\subseteq\R^n$ with $0\in K$ and every $N\in\N$ with $N\geq 3$,
\begin{eqnarray*}
&&\int_{O(n)}\frac{1}{(G_n(UK))^N}\sum_{x_1,\ldots,x_N\in UK\cap\Z^n}\max_{1\leq i\leq N}|\langle x_i,e_n\rangle|d\nu(U)\cr
&\leq& C\int_{O(n)}\left(\frac{1}{G_n(UK)}\sum_{x\in UK\cap\Z^n}|\langle x,e_n\rangle|^{\log N}\right)^\frac{1}{\log N}d\nu(U).
\end{eqnarray*}
\end{thm}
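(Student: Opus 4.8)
The plan is to establish the claimed inequality pointwise in $U\in O(n)$ and then integrate it against the Haar probability measure $\nu$; for each fixed $U$ the argument is the discrete counterpart of the classical proof of the upper bound in \eqref{eq:MeanWidthContinuous} (see \cite[Proposition 11.3.10]{BGVV}), with the uniform probability measure on the finite set $UK\cap\Z^n$ playing the role of the uniform measure on $K$. Since $0\in K$ we have $0\in UK\cap\Z^n$ for every $U$, so $G_n(UK)\geq 1$ and every quantity below is well defined.

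Concretely, I would fix $U\in O(n)$, put $m:=G_n(UK)$, and let $X_1,\dots,X_N$ be independent random vectors each uniformly distributed on $UK\cap\Z^n$, so that
$$
\frac{1}{m^N}\sum_{x_1,\dots,x_N\in UK\cap\Z^n}\max_{1\leq i\leq N}|\langle x_i,e_n\rangle|=\E\max_{1\leq i\leq N}|\langle X_i,e_n\rangle|.
$$
Using the elementary bound $\max_{1\leq i\leq N}a_i\leq\big(\sum_{i=1}^N a_i^q\big)^{1/q}$ (valid for $a_i\geq 0$ and any $q\geq 1$), then Jensen's inequality for the concave function $t\mapsto t^{1/q}$, and finally the linearity of the expectation together with the fact that the $X_i$ are identically distributed, I would obtain
$$
\E\max_{1\leq i\leq N}|\langle X_i,e_n\rangle|\leq\Big(\E\sum_{i=1}^N|\langle X_i,e_n\rangle|^q\Big)^{1/q}=N^{1/q}\big(\E|\langle X_1,e_n\rangle|^q\big)^{1/q}.
$$
Choosing $q=\log N$, which is $\geq 1$ because $N\geq 3$, gives $N^{1/q}=e$, hence
$$
\frac{1}{m^N}\sum_{x_1,\dots,x_N\in UK\cap\Z^n}\max_{1\leq i\leq N}|\langle x_i,e_n\rangle|\leq e\left(\frac{1}{m}\sum_{x\in UK\cap\Z^n}|\langle x,e_n\rangle|^{\log N}\right)^{1/\log N}.
$$

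Integrating this pointwise inequality over $O(n)$ with respect to $\nu$ then yields the theorem with the absolute constant $C=e$. I do not expect any real obstacle here: in contrast with Theorem \ref{thm:DiscreteBorell}, no correction factor of the type $C_0(K,p)$ enters, because dominating a maximum by an $\ell_q$-sum and applying Jensen's inequality uses only H\"older-type estimates and the choice $q=\log N$, none of which distinguishes the lattice point enumerator $G_n$ from the volume. The only mild point to keep in mind is that when the right-hand side vanishes---which forces $\langle x,e_n\rangle=0$ for every $x\in UK\cap\Z^n$, and in particular occurs when $G_n(UK)=1$---the left-hand side vanishes as well, so the displayed chain of inequalities still holds trivially.
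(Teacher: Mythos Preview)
Your proposal is correct and in fact yields the sharper constant $C=e$; the chain
\[
\E\max_{1\leq i\leq N}|\langle X_i,e_n\rangle|
\leq \E\Big(\sum_{i=1}^N|\langle X_i,e_n\rangle|^q\Big)^{1/q}
\leq \Big(\sum_{i=1}^N\E|\langle X_i,e_n\rangle|^q\Big)^{1/q}
= N^{1/q}\big(\E|\langle X_1,e_n\rangle|^q\big)^{1/q}
\]
is valid for any probability space, so no discrete correction term is needed, and the choice $q=\log N\geq\log 3>1$ is legitimate.

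The paper takes a genuinely different route. Rather than the one--line bound $\max_i a_i\leq(\sum_i a_i^q)^{1/q}$ followed by Jensen, it fixes a threshold $a\,h_q(U)$ with $h_q(U)=(\E|\langle X_U,e_n\rangle|^q)^{1/q}$, splits the integrand according to whether $\max_i|\langle x_i,e_n\rangle|$ lies below or above this threshold, bounds the ``below'' part trivially by $a\int h_q(U)\,d\nu(U)$, and controls the ``above'' part via a dyadic level--set decomposition together with Lemma~\ref{lem:ProbabilisticEstimateL} (union bound plus Markov). With $a=e$ and $q=\log N$ this produces $C=e(1+C_1)$ where $C_1=\frac{2}{1-2^{1-\log 3}}$, a noticeably worse constant than yours. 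Your argument is exactly the discrete transcription of \cite[Proposition~11.3.10]{BGVV} and is both shorter and sharper here; the paper's tail--splitting approach is more robust in contexts where one only has tail bounds rather than moment control, but for this particular statement it is unnecessarily heavy.
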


As a consequence, the right-hand side inequality in \eqref{eq:MeanWidthContinuous} will be recovered.

\begin{thm}\label{thm:DiscreteLowerBoundMeanWidth}
There exist absolute constants $c,C>0$ such that if  $K\subseteq\R^n$ is a convex body with $B_2^n\subseteq K$, $q\geq 1$ and $N\in\N$ with $(CD_{q}(K))^2\leq N\leq e^{q}$, then
\begin{eqnarray*}
&&\frac{1}{(G_n(K))^N}\sum_{x_1,\ldots,x_N\in K\cap\Z^n}\max_{1\leq i\leq N}|\langle x_i,e_n\rangle|\cr
&\geq& c\left(\frac{1}{G_n(K)}\sum_{x\in K\cap\Z^n}|\langle x,e_n\rangle|^\frac{\log N}{2\log(CD_{q}(K))}\right)^\frac{2\log(CD_{q}(K))}{\log N}.
\end{eqnarray*}
\end{thm}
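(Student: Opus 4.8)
The plan is to reduce the statement to a one–direction estimate and then average over rotations: I would prove that for \emph{every} fixed $U\in O(n)$ the corresponding inequality holds for the body $UK$ in the single direction $e_n$, and then integrate over $O(n)$ against $d\nu$. So fix $U\in O(n)$, let $Z$ be uniformly distributed on $UK\cap\Z^n$, and write $W=|\langle Z,e_n\rangle|$ and
\[
m_p:=(\E W^p)^{1/p}=\left(\frac{1}{G_n(UK)}\sum_{x\in UK\cap\Z^n}|\langle x,e_n\rangle|^p\right)^{1/p}\qquad(p\ge1).
\]
Since $B_2^n\subseteq UK$ we have $0,\pm e_n\in UK\cap\Z^n$, so $G_n(UK)\ge3$, $\max_{x\in UK\cap\Z^n}|\langle x,e_n\rangle|\ge1$, $m_p>0$, and $C_0(UK,p)$ is well defined for every $p\ge1$, with $C_0(UK,p)\le C_q(K)$ whenever $1\le p\le q$. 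Note also that, for i.i.d.\ copies $Z_1,\dots,Z_N$ of $Z$, the inner integrand appearing in the statement is exactly $\E\max_{1\le i\le N}|\langle Z_i,e_n\rangle|$, so it suffices to bound this expectation from below by an absolute constant multiple of $m_p$ for the specific exponent $p$ chosen below.

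The core of the argument is a one–direction quantile bound obtained from the discrete Borell inequality together with a Paley–Zygmund estimate. I would set $p:=\frac{\log N}{2\log(CC_q(K))}$, where $C$ is a sufficiently large absolute constant; concretely $C:=\max\{e,2C_B\}$, with $C_B$ the absolute constant of Theorem \ref{thm:DiscreteBorell}. From $C_q(K)\ge1$ (Remark \ref{rmk:LowerboundC0}), $C\ge e$, and the hypotheses $(CC_q(K))^2\le N\le e^q$ one checks that $1\le p$ and $2p=\frac{\log N}{\log(CC_q(K))}\le\log N\le q$, so Theorem \ref{thm:DiscreteBorell}, applied to $UK$ with the pair of exponents $p\le 2p$, gives $m_{2p}\le 2C_B\,C_0(UK,p)\,m_p\le 2C_B\,C_q(K)\,m_p$. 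Applying the Paley–Zygmund inequality to $W^p$ with parameter $\tfrac12$ (here $\E(W^p)^2=m_{2p}^{2p}$ and $(\E W^p)^2=m_p^{2p}$),
\[
\Pro\!\left(W>2^{-1/p}m_p\right)=\Pro\!\left(W^p>\tfrac12\E W^p\right)\ge\frac14\left(\frac{m_p}{m_{2p}}\right)^{2p}\ge\frac14\,(2C_B C_q(K))^{-2p}\ge\frac14\,(CC_q(K))^{-2p}=\frac{1}{4N},
\]
where the final equality is the whole point of the choice of $p$, since $2p\log(CC_q(K))=\log N$, and the penultimate inequality uses $2C_B\le C$.

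It then remains to pass from this small–ball bound to the expectation of the maximum. With $W_1,\dots,W_N$ i.i.d.\ copies of $W$,
\[
\Pro\!\left(\max_{1\le i\le N}W_i>2^{-1/p}m_p\right)=1-\bigl(1-\Pro(W>2^{-1/p}m_p)\bigr)^N\ge1-\Bigl(1-\tfrac{1}{4N}\Bigr)^{\!N}\ge1-e^{-1/4},
\]
so, using $2^{-1/p}\ge\tfrac12$ (valid since $p\ge1$) and $\E Y\ge t\,\Pro(Y>t)$,
\[
\E\max_{1\le i\le N}|\langle Z_i,e_n\rangle|\ \ge\ 2^{-1/p}m_p\,\bigl(1-e^{-1/4}\bigr)\ \ge\ \frac{1-e^{-1/4}}{2}\left(\frac{1}{G_n(UK)}\sum_{x\in UK\cap\Z^n}|\langle x,e_n\rangle|^{\frac{\log N}{2\log(CC_q(K))}}\right)^{\frac{2\log(CC_q(K))}{\log N}}.
\]
Since $C_q(K)$ does not depend on $U$, integrating this inequality over $U\in O(n)$ against $d\nu$ gives the assertion with $c=\tfrac12(1-e^{-1/4})$.

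The step I expect to be the main obstacle is the calibration of $p$ against the Borell constant: one must take $p$ proportional to $\log N/\log(CC_q(K))$ so that the Paley–Zygmund lower bound $\tfrac14(2C_B C_q(K))^{-2p}$ is of order $1/N$ — this is precisely what forces the event ``at least one of $N$ independent samples is $\gtrsim m_p$'' to have probability bounded below by a universal constant, and the admissibility constraint $1\le p\le q$ for this choice is exactly what produces the hypotheses $(CC_q(K))^2\le N\le e^q$. A secondary point is that one must work with the rotation–uniform quantity $C_q(K)$ rather than the sharper $C_0(UK,p)$, since only the former yields a bound uniform in $U$ and hence one that survives the integration over $O(n)$.
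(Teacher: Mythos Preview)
Your proposal is correct and follows essentially the same approach as the paper's proof: reduce to a single direction for a fixed $U$, apply Theorem~\ref{thm:DiscreteBorell} with exponents $p$ and $2p$, feed the resulting ratio $m_{2p}/m_p$ into Paley--Zygmund, choose $p=\frac{\log N}{2\log(CC_q(K))}$ so that the small-ball probability is $\ge\frac{1}{4N}$, and conclude via the standard independence-and-Markov argument; the paper obtains the same constant $c=\tfrac12(1-e^{-1/4})$. The only cosmetic difference is that the paper applies Paley--Zygmund with threshold $\tfrac12 m_p$ (parameter $2^{-p}$) directly, whereas you use parameter $\tfrac12$ to get threshold $2^{-1/p}m_p$ and then bound $2^{-1/p}\ge\tfrac12$.
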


Given a convex body $K\subseteq\R^n$ with $B_2^n\subseteq K$, and $U\in O(n)$ the convex body $UK$ satisfies that $B_2^n\subseteq UK$ and $D_q(UK)=D_q(K)$. Applying Theorem \ref{thm:DiscreteLowerBoundMeanWidth} to $UK$ and integrating in $U\in O(n)$, we obtain the following corollary, which provides a discrete version of the right hand-side inequality in \eqref{eq:MeanWidthRotations}:

\begin{cor}\label{cor:DiscreteLowerBoundMeanWidth}
There exist absolute constants $c,C>0$ such that if  $K\subseteq\R^n$ is a convex body with $B_2^n\subseteq K$, $q\geq 1$ and $N\in\N$ with $(CD_{q}(K))^2\leq N\leq e^{q}$, then
\begin{eqnarray*}
&&\int_{O(n)}\frac{1}{(G_n(UK))^N}\sum_{x_1,\ldots,x_N\in UK\cap\Z^n}\max_{1\leq i\leq N}|\langle x_i,e_n\rangle|d\nu(U)\cr
&\geq& c\int_{O(n)}\left(\frac{1}{G_n(UK)}\sum_{x\in UK\cap\Z^n}|\langle x,e_n\rangle|^\frac{\log N}{2\log(CD_{q}(K))}\right)^\frac{2\log(CD_{q}(K))}{\log N}d\nu(U).
\end{eqnarray*}
\end{cor}

As a consequence, the left-hand side  inequality in \eqref{eq:MeanWidthContinuous} will be recovered.

\begin{rmk}\label{rmk:Nandq}
Let us point out that, given a fixed $N\in\N$, it is necessary that $q\geq\log N$ in order to fulfill the conditions in Theorem \ref{thm:DiscreteLowerBoundMeanWidth} and Corollary \ref{cor:DiscreteLowerBoundMeanWidth}. However, it might not be possible to find a number $q\geq 1$ such that $(CD_q(K))^2\leq N$, as the following example in $\R^2$ shows: let, for any $M\in\N$, $K_M=\textrm{conv}\left(B_2^2\cup\{(M,0),(-M,0)\}\right)\subseteq\R^2$. This convex body satisfies that $B_2^2\subseteq K_M$ and for any $p\geq 1$
\begin{eqnarray*}
D_0(K_M,p)&=&\frac{1+(\E|\langle Y,e_2\rangle|^p)^\frac{1}{p}\left(\frac{G_2(K_M+C_2)}{G_2(K_M)}\right)^\frac{1}{p}}{\left(\E|\langle X, e_2\rangle|^p\right)^\frac{1}{p}}\cr
&=&\frac{G_2(K_M)^\frac{1}{p}+\left(\sum_{x\in(K_M+C_2)\cap\Z^2}|\langle x,e_2\rangle|^p\right)^\frac{1}{p}}{\left(\sum_{x\in K_M\cap\Z^2}|\langle x,e_2\rangle|^p\right)^\frac{1}{p}}\cr
&=&\frac{(2M+3)^\frac{1}{p}+(2(2M+1))^\frac{1}{p}}{2^\frac{1}{p}}=\left(M+\frac{3}{2}\right)^\frac{1}{p}+(2M+1)^\frac{1}{p}.
\end{eqnarray*}
Thus, for any $q\geq1$,
$$
D_q(K_M)\geq\sup_{1\leq p\leq q}\left(\left(M+\frac{3}{2}\right)^\frac{1}{p}+(2M+1)^\frac{1}{p}\right)=M+\frac{3}{2}+2M+1=3M+\frac{5}{2}
$$
and, given a fixed $N\in\N$, taking the convex body $K_M$ with $M$ large enough there exists no $q\geq1$ for which the condition $(CD_q(K_M))^2\leq N$ is fulfilled.

However, given a fixed convex body $K\subseteq\R^n$ with $B_2^n\subseteq K$, H\"older's inequality implies that
\begin{eqnarray*}
D_q(K)&=&\sup_{\subs{U\in O(n)}{1\leq p\leq q}}\frac{1+(\E|\langle Y_U,e_n\rangle|^p)^\frac{1}{p}\left(\frac{G_n(UK+C_n)}{G_n(UK)}\right)^\frac{1}{p}}{\left(\E|\langle X_U, e_n\rangle|^p\right)^\frac{1}{p}}\cr
&\leq&\sup_{U\in O(n)}\frac{1+h_{UK}(e_n)\frac{G_n(UK+C_n)}{G_n(UK)}}{\E|\langle X_U, e_n\rangle|}\cr
&=&\sup_{U\in O(n)}\frac{2+h_{UK}(e_n)\frac{G_n(UK+C_n)}{G_n(UK)}}{\E|\langle X_U, e_n\rangle|}\cr
\end{eqnarray*}
where, for every $U\in O(n)$, $X_U$ denotes a random vector uniformly distributed on $UK\cap\Z^n$ and $Y_U$ denotes a random vector uniformly distributed on $(UK+C_n)\cap\Z^n$. Therefore, given a fixed convex body $K\subseteq\R^n$ with $B_2^n\subseteq K$ and calling
$$
D(K):=\sup_{U\in O(n)}\frac{2+h_{UK}(e_n)\frac{G_n(UK+C_n)}{G_n(UK)}}{\E|\langle X_U, e_n\rangle|},
$$
we have that for every $N\geq (CD(K))^2$, taking $q=\log N$, the conditions in Theorem \ref{thm:DiscreteLowerBoundMeanWidth} are fulfilled.

Moreover, given a fixed convex body $K\subseteq\R^n$ with $B_2^n\subseteq K$ and a fixed  $N\in\N$, with $N>C^2$, taking $q=\log N$, Lemma \ref{lem:LimitC} implies that if $\lambda>0$ is large enough, then the condition $(CD_q(\lambda K))^2\leq N\leq e^q$ will be fulfilled.
\end{rmk}
\section{Preliminaries}\label{sec:Preliminaries}

In this section we will introduce some well-known results that will be used in our proofs.

\subsection{The lattice point enumerator}\label{subsec:LatticePoint enumerator}
Let us recall that the lattice point enumerator measure, $dG_n$, is the measure on $\R^n$ given, for any Borel set $L\subseteq\R^n$, by
$$
G_n(L)=\sharp(L\cap\Z^n),
$$
where $\sharp(\cdot)$ denotes cardinality of a set. 

The measure given by the lattice point enumerator satisfies the following discrete version of the Brunn-Minkowski inequality, which was proved in \cite[Theorem 2.1]{IYNZ}, and from which one can recover the classical one \cite[Theorem 7.1.1]{Sch}. It reads as
follows:
\begin{thm}\label{thm: BM_lattice_point_no_G(K)G(L)>0}
Let $\lambda\in(0,1)$ and let $K,L\subset\R^n$ be non-empty bounded Borel sets. Then
$$
G_n\left((1-\lambda)K+\lambda L+(-1,1)^n\right)^\frac{1}{n}\geq(1-\lambda)G_n(K)^\frac{1}{n}+\lambda G_n(L)^\frac{1}{n}.
$$
\end{thm}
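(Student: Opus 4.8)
The plan is to obtain the discrete inequality from the classical Brunn--Minkowski inequality \cite[Theorem 7.1.1]{Sch}, by passing back and forth between $G_n$ and Lebesgue volume through the tiling of $\R^n$ by the half-open unit cubes $z+[0,1)^n$, $z\in\Z^n$. Since $G_n$ is monotone under inclusion and $(1-\lambda)(K\cap\Z^n)+\lambda(L\cap\Z^n)\subseteq(1-\lambda)K+\lambda L$, it suffices to prove, for finite nonempty sets $A,B\subseteq\Z^n$,
$$
G_n\bigl((1-\lambda)A+\lambda B+C_n\bigr)^\frac{1}{n}\ \ge\ (1-\lambda)(\sharp A)^\frac{1}{n}+\lambda(\sharp B)^\frac{1}{n},
$$
and then to apply this with $A=K\cap\Z^n$ and $B=L\cap\Z^n$, so that $\sharp A=G_n(K)$ and $\sharp B=G_n(L)$. (When $G_n(K)=0$, take instead a single point of $K$ for $A$, keeping $B=L\cap\Z^n$: then $\Vol(A+[0,1)^n)=\sharp A=1$ still, and the same computation yields $G_n((1-\lambda)K+\lambda L+C_n)^\frac{1}{n}\ge\lambda G_n(L)^\frac{1}{n}$, which is what is needed; symmetrically when $G_n(L)=0$.)

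For a finite set $S\subset\R^n$ put $\widetilde S=S+[0,1)^n$; when $S\subseteq\Z^n$ the cubes $s+[0,1)^n$, $s\in S$, are pairwise disjoint, so $\Vol(\widetilde S)=\sharp S$. A coordinatewise check gives $(1-\lambda)[0,1)^n+\lambda[0,1)^n=[0,1)^n$, whence the identity $(1-\lambda)\widetilde A+\lambda\widetilde B=(1-\lambda)A+\lambda B+[0,1)^n$. The heart of the matter is the inclusion
$$
(1-\lambda)A+\lambda B+[0,1)^n\ \subseteq\ \bigcup_{z\in\Z^n\cap\left((1-\lambda)A+\lambda B+C_n\right)}\bigl(z+[0,1)^n\bigr).
$$
Indeed, if $w=v+t$ with $v\in(1-\lambda)A+\lambda B$ and $t\in[0,1)^n$, set $z=\lfloor w\rfloor\in\Z^n$ (floor taken coordinatewise); then $w\in z+[0,1)^n$ and $z-v=(\lfloor w\rfloor-w)+t\in(-1,0]^n+[0,1)^n=(-1,1)^n=C_n$, so that $z\in(1-\lambda)A+\lambda B+C_n$. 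Taking Lebesgue measure in this inclusion and using once more that distinct cubes $z+[0,1)^n$ are disjoint, we get
$$
\Vol\bigl((1-\lambda)\widetilde A+\lambda\widetilde B\bigr)=\Vol\bigl((1-\lambda)A+\lambda B+[0,1)^n\bigr)\ \le\ G_n\bigl((1-\lambda)A+\lambda B+C_n\bigr).
$$

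The sets $\widetilde A,\widetilde B$ are nonempty, bounded and Borel (finite unions of boxes), so applying the classical Brunn--Minkowski inequality to them gives
\begin{align*}
G_n\bigl((1-\lambda)A+\lambda B+C_n\bigr)^\frac{1}{n}
&\ge\Vol\bigl((1-\lambda)\widetilde A+\lambda\widetilde B\bigr)^\frac{1}{n}\\
&\ge(1-\lambda)\Vol(\widetilde A)^\frac{1}{n}+\lambda\Vol(\widetilde B)^\frac{1}{n}\\
&=(1-\lambda)(\sharp A)^\frac{1}{n}+\lambda(\sharp B)^\frac{1}{n},
\end{align*}
as required.

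There is no hard analysis involved: the argument is a change of viewpoint --- lattice points versus half-open unit cubes --- that reduces everything to the classical inequality, which we are entitled to use since it has an independent proof in \cite{Sch}. The one step that must be calibrated exactly is the inclusion in the second paragraph: it is precisely the rounding error $\lfloor w\rfloor-w$, together with the half-open shift $t$, that forces the corrective Minkowski summand to be the cube $C_n=(-1,1)^n$ of edge length $2$, and no smaller cube would suffice. The remaining points --- the identity $(1-\lambda)[0,1)^n+\lambda[0,1)^n=[0,1)^n$, the volume bookkeeping, and the measurability of the auxiliary sets --- are routine.
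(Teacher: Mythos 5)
Your proof is correct. Note first that the paper itself does not prove this theorem: it is quoted verbatim from \cite[Theorem~2.1]{IYNZ} in the preliminaries, so there is no in-paper argument to compare against, and your derivation from the classical Brunn--Minkowski inequality via the half-open cube tiling stands on its own. The reduction to finite sets $A,B\subseteq\Z^n$ by monotonicity of $G_n$, the identity $(1-\lambda)[0,1)^n+\lambda[0,1)^n=[0,1)^n$, the floor-function inclusion with the key estimate $z-v=(\lfloor w\rfloor-w)+t\in(-1,0]^n+[0,1)^n\subseteq(-1,1)^n=C_n$, the disjointness of the cubes $z+[0,1)^n$, and the handling of the degenerate cases $G_n(K)=0$ or $G_n(L)=0$ all check out. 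The one small point worth flagging is that \cite[Theorem~7.1.1]{Sch} is stated for convex bodies, whereas $\widetilde A,\widetilde B$ are finite unions of half-open boxes and in particular are not compact; this is harmless, since you can either invoke the general Brunn--Minkowski inequality for compact or Borel sets (also available in Schneider's book), or replace $\widetilde A,\widetilde B$ by their closures, which are compact, have the same volume, and whose convex combination has the same volume as $(1-\lambda)\widetilde A+\lambda\widetilde B$, so nothing in the chain of inequalities changes.
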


The measure  $dG_n$ also satisfies (see \cite[Lemma 3.22]{TV} and \cite[Section 3.1]{ALY}) that for any convex body $K\subseteq\R^n$ and any bounded set $M$ containing the origin
\begin{equation}\label{eq:ApproachingVolByDilationsWithG}
\lim_{\lambda\to\infty}\frac{G_n(\lambda K+M)}{\lambda^n}=|K|.
\end{equation}
Moreover, for any $f:K\to\R$ which is Riemann-integrable on $K$, we have that
\begin{equation}\label{eq:ApproachingIntegralWithG}
\lim_{\lambda\to\infty}\frac{1}{\lambda^n}\int_{ \lambda K}f\left(\frac{x}{\lambda}\right)dG_n(x)=\lim_{\lambda\to\infty}\frac{1}{\lambda^n}\sum_{x\in K\cap\left(\frac{1}{\lambda}\Z^n\right)}f(x)=\int_{K}f(x)dx,
\end{equation}
where the first identity follows from the definition of the measure $dG_n$. The second identity can be obtained by extending the function $f$ to a rectangle containing $K$ as $f(x)=0$ for every $x\not\in K$, which is Riemann-integrable on the rectangle, and applying \cite[Proposition 6.3]{CS}, which is valid for Riemann-integrable functions on the rectangle. Notice that $\displaystyle{\sum_{x\in K\cap\left(\frac{1}{\lambda}\Z^n\right)}f(x)}$ is a Riemann sum of the extension of $f$ to the rectangle, corresponding to the partition given by the rectangle intersected with cubes with vertices on $\frac{1}{\lambda}\Z^n$.

As a consequence of \eqref{eq:ApproachingVolByDilationsWithG} and \eqref{eq:ApproachingIntegralWithG}, many continuous inequalities can be recovered from discrete inequalities. The following consequence of these equations will be useful in the sequel.

\begin{lemma}\label{lemma:limite} Let $K\subset \mathbb R^n$ be a convex body with $0\in K$, $m\in \mathbb N$ and $q\geq 1$. Then
	\begin{eqnarray*}
		&&\lim_{\lambda\to\infty}\int_{O(n)}\left(\frac{1}{(G_n(\lambda UK))^m}\sum_{x_1,\dots,x_m\in \lambda UK\cap\Z^n}\max_{1\leq i\leq m}\left|\left\langle \frac{x_i}{\lambda},e_n\right\rangle\right|^q\right)^{1/q}d\nu(U)\cr
		&=&\int_{O(n)}\left(\frac{1}{|UK|^m}\int_{UK}\dots\int_{UK}\max_{1\leq i\leq m}|\langle x_i,e_n\rangle|^q dx_m\dots dx_1\right)^{1/q}d\nu(U).
	\end{eqnarray*}
\end{lemma}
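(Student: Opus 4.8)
The plan is to reduce the statement to a single application of the dominated convergence theorem on $O(n)$, with the integrand for each fixed $U$ handled by the lattice‑point approximation formula \eqref{eq:ApproachingIntegralWithG}. First I would fix $U\in O(n)$ and rewrite the inner quantity: after the change of variables $x_i\mapsto x_i/\lambda$ (so that $x_i\in UK\cap(\tfrac1\lambda\Z^n)$), the sum over $x_1,\dots,x_m\in \lambda UK\cap\Z^n$ divided by $(G_n(\lambda UK))^m$ becomes exactly the average over the $m$‑fold product of the finite set $UK\cap(\tfrac1\lambda\Z^n)$ of the function $f_U(x_1,\dots,x_m)=\max_{1\le i\le m}|\langle x_i,e_n\rangle|^q$. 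This function is bounded and Riemann‑integrable on the product $(UK)^m$ (its set of discontinuities, where two of the maximands coincide, has measure zero, and $\partial((UK)^m)$ has measure zero since $UK$ is a convex body). Applying the $nm$‑dimensional version of \eqref{eq:ApproachingIntegralWithG} to $f_U$ on $(UK)^m\subseteq\R^{nm}$ — equivalently, using that the Riemann sums over the cubical partition coming from $\tfrac1\lambda\Z^{nm}$ converge — together with \eqref{eq:ApproachingVolByDilationsWithG} applied to $UK$ (to handle the normalizing factor $G_n(\lambda UK)/\lambda^n\to|UK|=|K|$), yields that for every fixed $U$,
\begin{equation*}
\lim_{\lambda\to\infty}\frac{1}{(G_n(\lambda UK))^m}\sum_{x_1,\dots,x_m\in \lambda UK\cap\Z^n}\max_{1\leq i\leq m}\left|\left\langle \tfrac{x_i}{\lambda},e_n\right\rangle\right|^q=\frac{1}{|UK|^m}\int_{UK}\dots\int_{UK}\max_{1\leq i\leq m}|\langle x_i,e_n\rangle|^q\,dx_m\dots dx_1,
\end{equation*}
and hence, raising to the power $1/q$ (a continuous operation), the inner integrand converges pointwise in $U$.

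Next I would pass the limit through the integral over $O(n)$ by dominated convergence. For the domination, note that $UK\cap\Z^n\subseteq R_U:=[-r_U,r_U]^n$ where $r_U=\max_{x\in UK}\|x\|_\infty=\|K\|_\infty$ is independent of $U$ since $O(n)$ preserves the Euclidean ball and hence $UK$ is contained in a fixed ball. Thus $\max_i|\langle x_i/\lambda,e_n\rangle|\le \|K\|_\infty$ for all terms, so the inner quantity is bounded above by $\|K\|_\infty$ uniformly in $\lambda$ and $U$; since $\nu$ is a probability measure the constant function $\|K\|_\infty$ is integrable, and dominated convergence applies. This gives the claimed equality of the two integrals over $O(n)$.

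The main obstacle, and the point that needs to be stated carefully, is the justification that \eqref{eq:ApproachingIntegralWithG} may be invoked for the non‑smooth, multivariate function $f_U(x_1,\dots,x_m)=\max_i|\langle x_i,e_n\rangle|^q$ on the product body $(UK)^m$: one must check it is genuinely Riemann‑integrable there (so that the cited \cite[Proposition 6.3]{CS} / the Riemann‑sum argument in the excerpt applies verbatim in dimension $nm$), and one must be slightly careful that the measure $G_n$ on $\R^{nm}$ used in the approximation is the lattice‑point enumerator for $\Z^{nm}$, which is exactly the $m$‑fold product situation encoded by summing independently over $x_1,\dots,x_m\in\lambda UK\cap\Z^n$. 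A secondary, minor technical point is that \eqref{eq:ApproachingVolByDilationsWithG} as stated requires $0$ in the body or at least boundedness; since $0\in K$ we have $0\in UK$ and the hypothesis is met, so $G_n(\lambda UK)/\lambda^n\to|UK|$ and the normalization behaves as needed. Once these points are in place the rest is the routine change of variables and dominated convergence described above.
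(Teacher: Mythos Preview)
Your proposal is correct and follows essentially the same route as the paper's proof: establish pointwise convergence in $U$ via the lattice-sum-to-integral approximation (applied on the product body $(UK)^m$) together with $G_n(\lambda UK)/\lambda^n\to|UK|$, then invoke dominated convergence with a uniform bound coming from the circumradius $R(K)$. The only slip is the equality $r_U=\max_{x\in UK}\|x\|_\infty=\|K\|_\infty$, which is not independent of $U$ as written; however, your own justification via containment in a fixed Euclidean ball gives $|\langle x_i/\lambda,e_n\rangle|\le R(K)$, which is precisely the dominant the paper uses.
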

\begin{proof}
	For every $U\in O(n)$,
	\begin{eqnarray*}
		&&\lim_{\lambda\to\infty}\frac{1}{(G_n(\lambda UK))^m}\sum_{x_1,\dots,x_m\in \lambda UK\cap\Z^n}\max_{1\leq i\leq m}\left|\left\langle \frac{x_i}{\lambda},e_n\right\rangle\right|^q\cr
		&=&\lim_{\lambda\to\infty}\left(\frac{\lambda^n}{G_n(\lambda UK)}\right)^m\frac{1}{\lambda^{nm}}\sum_{x_1,\dots,x_N\in UK\cap\left(\frac{1}{\lambda}\Z^n\right)}\max_{1\leq i\leq m}\left|\left\langle x_i,e_n\right\rangle\right|^q\cr
		&=&\frac{1}{|UK|^m}\int_{UK}\dots\int_{UK}\max_{1\leq i\leq m}|\langle x_i,e_n\rangle|^q dx_m\dots dx_1.
	\end{eqnarray*}
	Besides, for every $\lambda>0$ and any $U\in O(n)$,
	\begin{eqnarray*}
		&&\left(\frac{1}{(G_n(\lambda UK))^m}\sum_{x_1,\dots,x_N\in \lambda UK\cap\left(\Z^n\right)}\max_{1\leq i\leq m}\left|\left\langle \frac{x_i}{\lambda},e_n\right\rangle\right|^q\right)^{\frac{1}{q}}\cr
&=&\left(\frac{1}{(G_n(\lambda UK))^m}\sum_{x_1,\dots,x_N\in UK\cap\left(\frac{1}{\lambda}\Z^n\right)}\max_{1\leq i\leq m}\left|\left\langle x_i,e_n\right\rangle\right|^q\right)^{\frac{1}{q}}\cr
		&\leq&\left(\frac{1}{(G_n(\lambda UK))^m}\sum_{x_1,\dots,x_m\in UK\cap\left(\frac{1}{\lambda}\Z^n\right)}\max_{x\in UK}|\langle x,e_n\rangle|^q\right)^{\frac{1}{q}}\\
		&=&\max_{x\in UK}|\langle x,e_n\rangle|=\max\{h_{UK}(e_n),h_{UK}(-e_n)\}\leq R(UK)=R(K),
	\end{eqnarray*}
where $R(L):=\inf\{R>0\,:\,L\subseteq RB_2^n\}$ denotes the circumradius of $L$, which is integrable in $O(n)$. By the dominated convergence theorem, the result follows. 	
\end{proof}

Notice also that given a Borel set $L\subseteq\R^n$, for every $x\in L\cap\Z^n$ we have that $x+\frac{1}{2}B_\infty^n$ is a cube with volume $|x+\frac{1}{2}B_\infty^n|=1$ and if $x_1,x_2\in L\cap\Z^n$ with $x_1\neq x_2$, then  $(x_1+\frac{1}{2}B_\infty^n)\cap(x_2+\frac{1}{2}B_\infty^n)$ has volume 0. Taking also into account that $L\cap\Z^n\subseteq L$ and then $(L\cap\Z^n)+\frac{1}{2}B_\infty^n\subseteq L+\frac{1}{2}B_\infty^n$ we have that
\begin{equation}\label{eq:LatticePointAndVolume}
G_n(L)=\left|(L\cap\Z^n)+\frac{1}{2}B_\infty^n\right|\leq\left|L+\frac{1}{2}B_\infty^n\right|.
\end{equation}

\subsection{Expected mean width of random polytopes}\label{subsec:ExpectedMeanWidthRandomPOlytopes}

In this section we give a proof of the lower bound in \eqref{eq:MeanWidthContinuous}, which holds for every $N\geq n$, if $N\geq 3$, since we could not find this proof in the literature. We state it here as a theorem.

\begin{thm}
Let $K\subseteq\R^n$ be a convex body with $0\in\operatorname{int} K$, $N\geq n$ with $N\geq 3$, let $X_1,\ldots, X_N$ be independent random vectors uniformly distributed on $K$, and let $K_N$ be the centrally symmetric random polytope generated by  $X_1,\dots X_N$, that is, $K_N=\operatorname{conv}\{\pm X_1,\ldots,\pm X_N\}$. There exists an absolute constant $c_1>0$ such that
$$
\E w(K_N)\geq c_1 w(Z_{\log N}(K)).
$$
\end{thm}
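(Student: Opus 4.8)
The plan is to reduce the inequality $\E w(K_N)\ge c_1 w(Z_{\log N}(K))$ to a one–dimensional statement via the two rotation–averaged identities already displayed in the excerpt, and then to prove that one–dimensional statement using only the log–concavity of the one–dimensional marginals of the uniform distribution on a convex body. Since $\E w(K_N)=\int_{O(n)}\E\max_{1\le i\le N}|\langle UX_i,e_n\rangle|\,d\nu(U)$ and $w(Z_{\log N}(K))=\int_{O(n)}h_{Z_{\log N}(UK)}(e_n)\,d\nu(U)$, it suffices to prove the following: for every convex body $L\subseteq\R^n$ with $0\in\operatorname{int}L$ and every integer $N\ge 3$, if $Y_1,\dots,Y_N$ are independent and uniform on $L$ and $Z_i:=|\langle Y_i,e_n\rangle|$, then
\[
\E\max_{1\le i\le N}Z_i\ \ge\ c_1\big(\E Z_1^{\log N}\big)^{1/\log N}=c_1\, h_{Z_{\log N}(L)}(e_n).
\]
Applying this with $L=UK$ (so that $Y_i=UX_i$) and integrating over $O(n)$ against the Haar measure then gives the theorem; note the argument works for every $N\ge 3$, which in particular covers all $N\ge n$ once $n\ge 3$.

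The core of the argument is the small–ball estimate: if $W$ is a real random variable with log–concave density on $\R$ — as is the case for $W=\langle Y,e_n\rangle$ with $Y$ uniform on a convex body, marginals of log–concave densities being log–concave — and $p\ge 1$, then
\[
\Pro\Big(|W|\ge \tfrac1{3e}\,(\E|W|^p)^{1/p}\Big)\ \ge\ e^{-p}.
\]
I would prove this by contradiction. Set $M=(\E|W|^p)^{1/p}>0$ and $t_1=\tfrac1{3e}M$, and let $g_\pm(t)=\Pro(\pm W\ge t)$; these are non–increasing, bounded by $1$, and log–concave on $\R$ (survival functions of the log–concave densities of $W$ and $-W$), and $\Pro(|W|\ge t)=g_+(t)+g_-(t)$ for $t>0$. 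Assume $g_+(t_1)<e^{-p}$ and $g_-(t_1)<e^{-p}$. For each $\bullet\in\{+,-\}$, either $g_\bullet$ vanishes on $[t_1/2,\infty)$, or $g_\bullet(t_1/2)\in(0,1]$ and the concavity of $\log g_\bullet$ on the interval $\{g_\bullet>0\}$, applied to the chord joining $t_1/2$ and $t_1$, gives $\log g_\bullet(t)\le \log g_\bullet(t_1)\big(1+\tfrac{2(t-t_1)}{t_1}\big)<-p\big(1+\tfrac{2(t-t_1)}{t_1}\big)$ for $t\ge t_1$; in both cases $g_\bullet(t)\le e^{p}e^{-2pt/t_1}$ for $t\ge t_1$. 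Hence, by the layer–cake formula,
\[
M^p=\int_0^\infty pt^{p-1}\big(g_+(t)+g_-(t)\big)\,dt\ \le\ \int_0^{t_1}2pt^{p-1}\,dt+2e^{p}\int_0^\infty pt^{p-1}e^{-2pt/t_1}\,dt .
\]
Since $\int_0^\infty pt^{p-1}e^{-2pt/t_1}\,dt=\Gamma(p+1)\big(\tfrac{t_1}{2p}\big)^p\le \big(\tfrac{t_1}{2}\big)^p$ (using $\Gamma(p+1)\le p^p$ for $p\ge 1$), this gives $M^p\le 2t_1^p+2(e/2)^pt_1^p\le 4(e/2)^pt_1^p=4\cdot 6^{-p}M^p<M^p$, a contradiction. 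Therefore $g_+(t_1)+g_-(t_1)\ge e^{-p}$, i.e. $\Pro(|W|\ge t_1)\ge e^{-p}$.

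Granting this, I finish by taking $p=\log N$, so that $e^{-p}=1/N$ and $t_1=\tfrac1{3e}h_{Z_{\log N}(L)}(e_n)$; the estimate (with $W=\langle Y_1,e_n\rangle$) yields $\Pro(Z_1\ge t_1)\ge 1/N$, and then
\[
\E\max_{1\le i\le N}Z_i\ \ge\ t_1\,\Pro\big(\max_{1\le i\le N} Z_i\ge t_1\big)=t_1\big(1-(1-\Pro(Z_1\ge t_1))^N\big)\ \ge\ \big(1-e^{-1}\big)\,t_1 ,
\]
which is the required one–dimensional inequality with $c_1=(1-e^{-1})/(3e)$. I expect the only genuinely delicate point to be this one–dimensional estimate: unlike $W$, the variable $|W|$ need not have a log–concave survival function, so one must work with the one–sided survival functions $g_\pm$ separately and exploit that it suffices to rule out the event that \emph{both} drop below $e^{-p}$ at the threshold $t_1$. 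The remaining ingredients — the chord/exponential–decay bound, the Gamma–function estimate, the elementary inequality $\E V\ge t\,\Pro(V\ge t)$ for $V\ge 0$, and the rotation–averaged formulas — are routine.
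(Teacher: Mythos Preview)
Your overall strategy coincides with the paper's: fix a direction, obtain a small-ball bound of the form $\Pro\big(|\langle X_1,e_n\rangle|\ge c\,h_{Z_{\log N}(K)}(e_n)\big)\ge 1/N$, amplify via independence to $\Pro\big(\max_i|\langle X_i,e_n\rangle|\ge c\,h_{Z_{\log N}}\big)\ge 1-e^{-1}$, apply $\E V\ge t\,\Pro(V\ge t)$, and integrate over the sphere. The only difference is that the paper does not prove the small-ball step from scratch: it introduces the floating body $K_{1/N}$, for which $\Pro(|\langle X_1,\theta\rangle|\ge h_{K_{1/N}}(\theta))\ge 1/N$ holds by definition, and then quotes the Paouris--Werner comparison $h_{K_\delta}(\theta)\ge c'\,h_{Z_{\log(1/\delta)}(K)}(\theta)$. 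Your small-ball estimate is exactly this comparison in disguise, so your plan to supply a self-contained proof is reasonable in principle.

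However, the chord inequality you write is incorrect. From concavity of $h=\log g_\bullet$ and the secant through $(t_1/2,h(t_1/2))$ and $(t_1,h(t_1))$ one obtains, for $t\ge t_1$,
\[
h(t)\ \le\ h(t_1)+\frac{2\big(h(t_1)-h(t_1/2)\big)}{t_1}\,(t-t_1).
\]
To deduce your claimed bound $h(t)\le h(t_1)\big(1+\tfrac{2(t-t_1)}{t_1}\big)=h(t_1)+\tfrac{2h(t_1)}{t_1}(t-t_1)$ you would need the secant slope to be at most $\tfrac{2h(t_1)}{t_1}$, i.e.\ $h(t_1)-h(t_1/2)\le h(t_1)$, i.e.\ $g_\bullet(t_1/2)\ge 1$. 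But you only know $g_\bullet(t_1/2)\le 1$, which gives the reverse inequality for the slope and therefore no nontrivial exponential decay; indeed, for $g_\bullet(t)=\tfrac12 e^{-t}$ (the one-sided tail of a Laplace variable) your displayed inequality fails already at $t=2t_1$. Anchoring at $0$ instead of $t_1/2$ does not repair this, since $g_+(0)$ and $g_-(0)$ only sum to $1$ and one of them may be arbitrarily small. The inequality $\Pro(|W|\ge c\|W\|_p)\ge e^{-p}$ you aim for is true for log-concave $W$, but it requires a genuinely two-sided argument (e.g.\ Borell's lemma applied to the symmetric interval $\{|w|\le s\}$, as in the paper's own proof of Theorem~\ref{thm:DiscreteBorell}, or the Paouris--Werner argument); the one-sided chord bound you wrote does not suffice.
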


\begin{proof}
For any $\theta\in S^{n-1}$ and any $\alpha>0$ we have
\begin{eqnarray*}
\Pro\left(\max_{1\leq i\leq N}|\langle X_i,\theta\rangle|\geq\alpha\right)&=&1-\Pro\left(\max_{1\leq i\leq N}|\langle X_i,\theta\rangle|<\alpha\right)\\
&=&1-\Pro\left(|\langle X_1,\theta\rangle|<\alpha\right)^N\\
&=&1-(1-\Pro(|\langle X_1,\theta\rangle|\geq\alpha))^N.
\end{eqnarray*}
Therefore, taking $\alpha= h_{K_{\frac{1}{N}}}(\theta)$, where for any $0<\delta<1$, $K_\delta$ is the floating body defined by
$$
h_{K_\delta}(\theta)=\sup\{t>0\,:\,\Pro(|\langle X_1,\theta\rangle|\geq t)\geq \delta\}
$$
we have that
$$
\Pro\left(\max_{1\leq i\leq N}|\langle X_i,\theta\rangle|\geq h_{K_{\frac{1}{N}}}(\theta)\right)\geq 1-\left(1-\frac{1}{N}\right)^N\geq 1-\frac{1}{e}.
$$
Thus, by Markov's inequality, for any $\theta\in S^{n-1}$, calling $c=1-e^{-1}$,
$$
\E h_{K_N}(\theta)\geq h_{K_{\frac{1}{N}}}(\theta)\cdot \Pro\left(\max_{1\leq i\leq N}|\langle X_i,\theta\rangle|\geq h_{K_{\frac{1}{N}}}(\theta)\right)\geq ch_{K_{\frac{1}{N}}}(\theta).
$$
It was proved in \cite[Thm 2.2]{PW} that there exist absolute constants $c_1,c_2>0$ such that
$$
c_1 h_{Z_{\log\frac{e}{2\delta}}(K)}(\theta)\leq h_{K_\delta}(\theta)\leq c_2 h_{Z_{\log\frac{e}{2\delta}}(K)}(\theta).
$$
for any $0<\delta<\frac{1}{2}$. Since there exists an absolute constant $C>0$ such that $1\leq\log\frac{1}{\delta}\leq\log\frac{e}{2\delta}\leq C\log\frac{1}{\delta}$ for every $0<\delta\leq\frac{1}{e}<\frac{1}{2}$, the inclusion relation \eqref{eq:HolderAndBorell} implies that there exist absolute constants $c^\prime=c_1$ and $c^{\prime\prime}=c_2C>0$ such that
$$
c^\prime h_{Z_{\log\frac{1}{\delta}}(K)}(\theta)\leq h_{K_\delta}(\theta)\leq c^{\prime\prime} h_{Z_{\log\frac{1}{\delta}}(K)}(\theta).
$$
for any $0<\delta\leq\frac{1}{e}$. In particular, for any $\theta\in S^{n-1}$,
$$
\E h_{K_N}(\theta)\geq ch_{K_{\frac{1}{N}}}(\theta)\geq cc^{\prime}h_{Z_{\log N}(K)}(\theta).
$$
 if $N\geq 3$. Integrating in $\theta\in S^{n-1}$ and using Fubini's theorem we obtain the result with $c_1=cc^\prime$.
\end{proof}

\section{Discrete Borell's inequalities}

In this section we are going to prove Theorem \ref{thm:DiscreteBorell}. As a consequence of Theorem \ref{thm:DiscreteBorell} and Lemma \ref{lem:LimitC}, we can obtain inequality \eqref{eq:ContinuousBorell} for convex bodies with $0\in\textrm{int}K$.

\begin{proof}[Proof of Theorem \ref{thm:DiscreteBorell}]
The left-hand side inequality in \eqref{eq:DiscreteBorell} is a direct consequence of H\"older's inequality. Let us prove the right-hand side inequality. Fix $1\leq p\leq q$ and let $X$ be a random vector uniformly distributed on $K\cap\Z^n$ and let $Y$ be a random vector uniformly distributed on $(K+C_n)\cap\Z^n$. Let also, for $s>1$ to be chosen later,
\begin{eqnarray*}
A(s)&=&\left\{x\in\R^n\,:\,|\langle x,e_n\rangle|\leq s\left(\E|\langle X,e_n\rangle|^p\right)^\frac{1}{p}\right\}\cr
&=&\left\{x\in\R^n\,:\,|\langle x,e_n\rangle|^p\leq s^p\E|\langle X,e_n\rangle|^p\right\}.\cr
\end{eqnarray*}
Notice that, for any $t\geq1$ we have
$$
(tA(s))^c=A(ts)^c=\left\{x\in\R^n\,:\,|\langle x,e_n\rangle|> ts\left(\E|\langle X,e_n\rangle|^p\right)^\frac{1}{p}\right\}
$$
and
$$
A(s)^c+C_n=\left\{x\in\R^n\,:\,|\langle x,e_n\rangle|> s\left(\E|\langle X,e_n\rangle|^p\right)^\frac{1}{p}-1\right\}.
$$
Since $A(s)$ is convex and centrally symmetric, we have that for every $t>1$
$$
A(s)^c\supseteq\frac{2}{t+1}A(ts)^c+\frac{t-1}{t+1}A(s)
$$
and then, since $K$ is convex,
\begin{eqnarray*}
A(s)^c\cap K&\supseteq&\left(\frac{2}{t+1}A(ts)^c+\frac{t-1}{t+1}A(s)\right)\cap K\cr
&\supseteq&\frac{2}{t+1}(A(ts)^c\cap K)+\frac{t-1}{t+1}(A(s)\cap K).
\end{eqnarray*}
Therefore,
\begin{align*}
(A(s)^c+C_n)\cap (K+C_n)&\supseteq(A(s)^c\cap K+C_n)\cr
&\supseteq\frac{2}{t+1}(A(ts)^c\cap K)+\frac{t-1}{t+1}(A(s)\cap K)+C_n.
\end{align*}
By the discrete Brunn-Minkowski inequality (see Theorem \ref{thm: BM_lattice_point_no_G(K)G(L)>0}), if $A(ts)^c\cap K\neq\emptyset$,
\begin{eqnarray*}
G_n((A(s)^c+C_n)\cap (K+C_n))^\frac{1}{n}&\geq&\frac{2}{t+1} G_n((tA(s))^c\cap K)^\frac{1}{n}\cr
&+&\frac{t-1}{t+1}G_n(A(s)\cap K)^\frac{1}{n}.
\end{eqnarray*}
Thus,
$$
G_n((A(s)^c+C_n)\cap (K+C_n))\geq G_n(A(ts)^c\cap K)^\frac{2}{t+1}G_n(A(s)\cap K)^\frac{t-1}{t+1},
$$
being this inequality also true if $A(ts)^c\cap K=\emptyset$. Therefore, for any $t>1$
$$
\frac{G_n((A(s)^c+C_n)\cap (K+C_n))}{G_n(K)}\geq \left(\frac{G_n(A(ts)^c\cap K)}{G_n(K)}\right)^\frac{2}{t+1}\left(\frac{G_n(A(s)\cap K)}{G_n(K)}\right)^\frac{t-1}{t+1}.
$$

Notice that, by Markov's inequality,
\begin{align*}
	\frac{G_n(A(s)\cap K)}{G_n(K)}=1-\frac{G_n(A(s)^c\cap K)}{G_n(K)} \geq 1-s^{-p}.
\end{align*}	
Besides, also by  Markov's inequality, if $s\left(\E|\langle X,e_n\rangle|^p\right)^\frac{1}{p}>1$, then
$$
\frac{G_n((A(s)^c+C_n)\cap (K+C_n))}{G_n(K+C_n)}\leq \frac{\E|\langle Y,e_n\rangle|^p}{\left(s\left(\E|\langle X,e_n\rangle|^p\right)^\frac{1}{p}-1\right)^p}.
$$
Therefore, for every $t>1$, if $s\left(\E|\langle X,e_n\rangle|^p\right)^\frac{1}{p}>1$ we have
$$
\frac{G_n(K+C_n)}{G_n(K)}\frac{\E|\langle Y,e_n\rangle|^p}{\left(s\left(\E|\langle X,e_n\rangle|^p\right)^\frac{1}{p}-1\right)^p}\geq\left(\frac{G_n(A(ts)^c\cap K)}{G_n(K)}\right)^\frac{2}{t+1}\left(1-s^{-p}\right)^\frac{t-1}{t+1}
$$
and then, if also $s\geq 2$,
\begin{align*}
	\frac{G_n(A(ts)^c\cap K)}{G_n(K)}
&\leq \left(1-s^{-p}\right)\left(\frac{G_n(K+C_n)}{G_n(K)}\frac{\E|\langle Y,e_n\rangle|^p}{\left(1-s^{-p}\right)\left(s\left(\E|\langle X,e_n\rangle|^p\right)^\frac{1}{p}-1\right)^p}\right)^{\!\frac{t+1}{2}}\\
&\leq\left(\left(\frac{G_n(K+C_n)}{G_n(K)}\right)^\frac{1}{p}\frac{(\E|\langle Y,e_n\rangle|^p)^\frac{1}{p}}{\left(1-s^{-p}\right)^\frac{1}{p}\left(s\left(\E|\langle X,e_n\rangle|^p\right)^\frac{1}{p}-1\right)}\right)^{\!\frac{p(t+1)}{2}}\\
&\leq\left(\left(\frac{G_n(K+C_n)}{G_n(K)}\right)^\frac{1}{p}\frac{2(\E|\langle Y,e_n\rangle|^p)^\frac{1}{p}}{s\left(\E|\langle X,e_n\rangle|^p\right)^\frac{1}{p}-1}\right)^{\!\frac{p(t+1)}{2}}.\cr
\end{align*}

Now, choose

$$
s=\frac{4+4(\E|\langle Y,e_n\rangle|^p)^\frac{1}{p}\left(\frac{G_n(K+C_n)}{G_n(K)}\right)^\frac{1}{p}}{\left(\E|\langle X,e_n\rangle|^p\right)^\frac{1}{p}}=4D_0(K,p).
$$

Recall that, as seen in Remark \ref{rmk:LowerboundC0},  $D_0(K, p)\geq 1$ and so $s\geq 4$. Also,
$$
s\left(\E|\langle X,e_n\rangle|^p\right)^\frac{1}{p}-1\geq 4(\E|\langle Y,e_n\rangle|^p)^\frac{1}{p}\left(\frac{G_n(K+C_n)}{G_n(K)}\right)^\frac{1}{p}>0.
$$

Thus, we have that
\begin{eqnarray*}
\Pro\left(|\langle X, e_n\rangle|>ts\left(\E|\langle X, e_n\rangle|^p\right)^\frac{1}{p}\right)&=&\frac{G_n(A(ts)^c\cap K)}{G_n(K)}\leq\left(\frac{1}{2}\right)^\frac{p(t+1)}{2}\leq\left(\frac{1}{2}\right)^\frac{pt}{2}\cr
&=&e^{-\frac{pt\log 2}{2}}.
\end{eqnarray*}

Therefore,

\allowdisplaybreaks
\begin{align*}
	\frac{\E|\langle X, e_n\rangle|^q}{\left(\E|\langle X, e_n\rangle|^p\right)^\frac{q}{p}}&=\frac{q\int_0^\infty t^{q-1}\Pro\left(|\langle X, e_n\rangle|>t\right)dt}{\left(\E|\langle X, e_n\rangle|^p\right)^\frac{q}{p}}\cr
	&=q\int_0^\infty u^{q-1}\Pro\left(|\langle X, e_n\rangle|>u\left(\E|\langle X, e_n\rangle|^p\right)^\frac{1}{p}\right)du\cr
	&=q\int_0^{s}u^{q-1}\Pro\left(|\langle X, e_n\rangle|>u\left(\E|\langle X, e_n\rangle|^p\right)^\frac{1}{p}\right)du\\
	& \quad +q\int_{s}^\infty u^{q-1}\Pro\left(|\langle X, e_n\rangle|>u\left(\E|\langle X, e_n\rangle|^p\right)^\frac{1}{p}\right)du\cr
	&\leq s^q + s^q q\int_{1}^\infty v^{q-1}\Pro\left(|\langle X, e_n\rangle|>vs\left(\E|\langle X, e_n\rangle|^p\right)^\frac{1}{p}\right)dv\cr
	&\leq s^q +s^q q\int_{1}^\infty v^{q-1}e^{-\frac{pv\log 2}{2}}dv \cr
	&\leq s^q + s^q q\int_{0}^\infty v^{q-1}e^{-\frac{pv\log 2}{2}}dv\cr
	&=s^q +s^q\left(\frac{2}{p\log 2}\right)^qq\int_{0}^\infty r^{q-1}e^{-r}dr\cr
	&=s^q + s^q\left(\frac{2}{p\log 2}\right)^q\Gamma(1+q)\cr
	&=s^q\left(1+\left(\frac{2}{p\log 2}\right)^q\Gamma(1+q)\right).
\end{align*}

Taking into account that, by Stirling's formula, there exists an absolute constant $K_0>0$ such that $\frac{2}{\log 2}\Gamma(1+q)^\frac{1}{q}\leq K_0q$ for any $q\geq 1$, we obtain
\begin{eqnarray*}
\left(\E|\langle X, e_n\rangle|^q\right)^\frac{1}{q}&\leq&s\left(\E|\langle X, e_n\rangle|^p\right)^\frac{1}{p}\left(1+\left(\frac{2}{p\log 2}\right)^q\Gamma(1+q)\right)^\frac{1}{q}\cr
&\leq&s\left(\E|\langle X, e_n\rangle|^p\right)^\frac{1}{p}\left(1+\left(\frac{2}{p\log 2}\right)\Gamma(1+q)^\frac{1}{q}\right)\cr
&\leq&s\left(\E|\langle X, e_n\rangle|^p\right)^\frac{1}{p}\left(1+K_0\frac{q}{p}\right)\cr
&\leq&s\left(\E|\langle X, e_n\rangle|^p\right)^\frac{1}{p}(K_0+1)\frac{q}{p}\cr
&=&4(K_0+1)\frac{q}{p}D_0(K,p),
\end{eqnarray*}
which proves  inequality \eqref{eq:DiscreteBorell} with $C=4(K_0+1)$.
\end{proof}

Given a convex body $K$ with $0\in \operatorname{int}K$, we denote $r(K)$ and $R(K)$ the inradius and circumradius of $K$, that is, the optimal constants satisfying that $r(K)B_2^n\subseteq K \subseteq R(K)B_2^n$.

\begin{lemma}\label{lemma:C0bdd}
	Let $K\subset \mathbb R^n$ be a convex body with $0\in \operatorname{int} K$,
	 $\max_{x\in K\cap \mathbb Z^n}|\langle x,e_n\rangle |\geq 1$ and  $\sqrt{n}R(K)\leq 2r(K)^2$. Then,
	
	\begin{itemize}
		\item [a)] $G_n((1+t)K\setminus K)\leq \left(\left(1+t+\frac{\sqrt{n}}{2r(K)}\right)^{\! n}-\left(1-\frac{\sqrt{n}R(K)}{2r^2(K)}\right)^{\!n}\right)|K|$ for all $t>0$.
		\item [b)] For any $p\geq 1$ we have
		\begin{align*}D_0(K, p)&\leq 1+\frac{|K+\frac{\sqrt{n}}{2}B_2^n|}{\sum_{x\in r(K)B_2^n\cap \Z^n} |\langle x, e_n\rangle|}\\
		&\quad +\frac{\left(1+\frac{\sqrt{n}}{r(K)}\right)R(K)\,\left(\left(1+\frac{3\sqrt{n}}{2r(K)}\right)^n-\left(1-\frac{\sqrt{n}R(K)}{2r^2(K)}\right)^n\right)^{\frac{1}{p}}|K|^\frac{1}{p}}{\left(\frac{1}{G_n(r(K)B_2^n)}\sum_{x\in r(K)B_2^n\cap\Z^n}|\langle x,e_n\rangle|\right) G_n( r(K)B_2^n)^\frac{1}{p}}. \end{align*}
	\end{itemize}	
\end{lemma}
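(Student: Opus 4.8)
The plan is to establish part (a) first and then feed it into part (b). For (a): since $0\in K$ we have $K\subseteq(1+t)K$, so $G_n((1+t)K\setminus K)=G_n((1+t)K)-G_n(K)$, and it suffices to bound the first term from above and the second from below. For the upper bound, combine \eqref{eq:LatticePointAndVolume} with the inclusions $\tfrac12 B_\infty^n\subseteq\tfrac{\sqrt n}{2}B_2^n$ and $\tfrac{\sqrt n}{2}B_2^n\subseteq\tfrac{\sqrt n}{2r(K)}K$ (the latter from $r(K)B_2^n\subseteq K$) to get $G_n((1+t)K)\le|(1+t)K+\tfrac{\sqrt n}{2}B_2^n|\le|(1+t+\tfrac{\sqrt n}{2r(K)})K|=(1+t+\tfrac{\sqrt n}{2r(K)})^n|K|$. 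For the lower bound, by \eqref{eq:LatticePointAndVolume} again, $G_n(K)=|(K\cap\Z^n)+\tfrac12 B_\infty^n|\ge|\{x:x+\tfrac12 B_\infty^n\subseteq K\}|$, since every $x$ with $x+\tfrac12 B_\infty^n\subseteq K$ lies in the unit cube $z+\tfrac12 B_\infty^n$ of its nearest lattice point $z$, and that $z$ belongs to $K$. As $\tfrac12 B_\infty^n\subseteq\tfrac{\sqrt n}{2}B_2^n$, this set contains $\{x:x+\tfrac{\sqrt n}{2}B_2^n\subseteq K\}$, which in turn contains $(1-\tfrac{\sqrt n}{2r(K)})K$ because $(1-\tfrac{\sqrt n}{2r(K)})K+\tfrac{\sqrt n}{2}B_2^n\subseteq(1-\tfrac{\sqrt n}{2r(K)})K+\tfrac{\sqrt n}{2r(K)}K=K$, where $\tfrac{\sqrt n}{2r(K)}\le1$ since $\sqrt nR(K)\le2r(K)^2$ and $R(K)\ge r(K)$ force $r(K)\ge\tfrac{\sqrt n}{2}$. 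Thus $G_n(K)\ge(1-\tfrac{\sqrt n}{2r(K)})^n|K|\ge(1-\tfrac{\sqrt nR(K)}{2r(K)^2})^n|K|\ge0$ (the middle step using $R(K)\ge r(K)$ and the hypothesis, which makes the base nonnegative), and subtracting the two estimates gives (a). (We actually obtain the slightly stronger bound with $(1-\tfrac{\sqrt n}{2r(K)})^n$.)

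For part (b), cancel the common factor $G_n(K)^{-1/p}$ from the numerator and denominator of the defining expression, so that $C_0(K,p)=\bigl(G_n(K)^{1/p}+(\sum_{y\in(K+C_n)\cap\Z^n}|\langle y,e_n\rangle|^p)^{1/p}\bigr)\big/(\sum_{x\in K\cap\Z^n}|\langle x,e_n\rangle|^p)^{1/p}$. Write $(K+C_n)\cap\Z^n$ as the union of $K\cap\Z^n$ and $((K+C_n)\setminus K)\cap\Z^n$ and use the subadditivity of $t\mapsto t^{1/p}$; the $K$-part of the numerator divided by the denominator contributes a $1$, leaving
\[
C_0(K,p)\le 1+\frac{G_n(K)^{1/p}}{\bigl(\sum_{x\in K\cap\Z^n}|\langle x,e_n\rangle|^p\bigr)^{1/p}}+\frac{\bigl(\sum_{y\in((K+C_n)\setminus K)\cap\Z^n}|\langle y,e_n\rangle|^p\bigr)^{1/p}}{\bigl(\sum_{x\in K\cap\Z^n}|\langle x,e_n\rangle|^p\bigr)^{1/p}}
\]
(the denominator is $\ge\max_{x\in K\cap\Z^n}|\langle x,e_n\rangle|\ge1$; and we may assume $\sum_{x\in r(K)B_2^n\cap\Z^n}|\langle x,e_n\rangle|>0$, since otherwise the right-hand side of (b) is $+\infty$). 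The crucial denominator estimate, obtained from $r(K)B_2^n\subseteq K$ and H\"older's inequality with exponents $p$ and $\tfrac{p}{p-1}$, is
\[
\Bigl(\sum_{x\in K\cap\Z^n}|\langle x,e_n\rangle|^p\Bigr)^{1/p}\ge\Bigl(\sum_{x\in r(K)B_2^n\cap\Z^n}|\langle x,e_n\rangle|^p\Bigr)^{1/p}\ge\frac{\sum_{x\in r(K)B_2^n\cap\Z^n}|\langle x,e_n\rangle|}{G_n(r(K)B_2^n)^{1-1/p}},
\]
which is exactly what produces the factor $G_n(r(K)B_2^n)^{1/p}$ in the denominators of the last two summands of (b).

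For the middle term, insert this estimate and use $G_n(r(K)B_2^n)\le G_n(K)$ (from $r(K)B_2^n\subseteq K$) to get $G_n(K)^{1/p}G_n(r(K)B_2^n)^{1-1/p}\le G_n(K)$, and then $G_n(K)\le|K+\tfrac12 B_\infty^n|\le|K+\tfrac{\sqrt n}{2}B_2^n|$ by \eqref{eq:LatticePointAndVolume}; this is the second summand of (b). For the last term, the chain $C_n\subseteq B_\infty^n\subseteq\sqrt n B_2^n\subseteq\tfrac{\sqrt n}{r(K)}K$ gives $(K+C_n)\setminus K\subseteq(1+\tfrac{\sqrt n}{r(K)})K\setminus K$, and on this shell $|\langle y,e_n\rangle|\le(1+\tfrac{\sqrt n}{r(K)})R(K)$ because $(1+\tfrac{\sqrt n}{r(K)})K\subseteq(1+\tfrac{\sqrt n}{r(K)})R(K)B_2^n$; hence $(\sum_{y\in((K+C_n)\setminus K)\cap\Z^n}|\langle y,e_n\rangle|^p)^{1/p}\le(1+\tfrac{\sqrt n}{r(K)})R(K)\,G_n\bigl((1+\tfrac{\sqrt n}{r(K)})K\setminus K\bigr)^{1/p}$, and part (a) with $t=\tfrac{\sqrt n}{r(K)}$ bounds this lattice count by $\bigl((1+\tfrac{3\sqrt n}{2r(K)})^n-(1-\tfrac{\sqrt nR(K)}{2r(K)^2})^n\bigr)|K|$. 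Dividing by the denominator estimate above yields precisely the third summand of (b), which finishes the proof.

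The step I expect to be the main obstacle is the lower bound $G_n(K)\ge(1-\tfrac{\sqrt n}{2r(K)})^n|K|$ in (a): one must argue cleanly that the volume of the inner parallel body of $K$ with respect to the cube $\tfrac12 B_\infty^n$ is bounded above by $G_n(K)$, and verify that the hypothesis $\sqrt nR(K)\le2r(K)^2$ legitimizes the dilation factor $1-\tfrac{\sqrt n}{2r(K)}\in[0,1]$. In (b) the only real care needed is bookkeeping the exponents $1/p$ so that the three pieces match the stated denominators, the key being the H\"older step displayed above, which is what generates the $G_n(r(K)B_2^n)^{1/p}$ factors.
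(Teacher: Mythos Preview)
Your proof is correct. Part (b) is essentially identical to the paper's argument: the paper also splits $C_0(K,p)$ into $\frac{1}{(\E|\langle X,e_n\rangle|^p)^{1/p}}$ plus $\bigl(\sum_{(K+C_n)\cap\Z^n}/\sum_{K\cap\Z^n}\bigr)^{1/p}$, uses $(1+a)^{1/p}\le 1+a^{1/p}$ on the latter, bounds the shell sum by $(1+\tfrac{\sqrt n}{r(K)})R(K)\cdot G_n\bigl((1+\tfrac{\sqrt n}{r(K)})K\setminus K\bigr)^{1/p}$, and feeds in part (a). The only cosmetic difference is that the paper estimates the ``middle'' term via $(\E|\langle X,e_n\rangle|^p)^{1/p}\ge \E|\langle X,e_n\rangle|\ge \tfrac{\sum_{r(K)B_2^n\cap\Z^n}|\langle x,e_n\rangle|}{G_n(K)}$, whereas you apply H\"older on $r(K)B_2^n$ and then use $G_n(r(K)B_2^n)\le G_n(K)$; both give the same bound. (Minor slip: only the \emph{third} summand of (b) carries the factor $G_n(r(K)B_2^n)^{1/p}$, not ``the last two''.)

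Part (a) is where you genuinely diverge from the paper. The paper bounds $G_n((1+t)K\setminus K)$ in one shot by the volume of the Minkowski sum $\bigl((1+t)K\setminus K\bigr)+\tfrac{\sqrt n}{2r(K)}K$, and then uses the inclusion $-K\subseteq \tfrac{R(K)}{r(K)}K$ to argue that this sum is contained in $(1+t+\tfrac{\sqrt n}{2r(K)})K\setminus(1-\tfrac{\sqrt n R(K)}{2r^2(K)})K$. You instead write $G_n((1+t)K\setminus K)=G_n((1+t)K)-G_n(K)$, bound the first term above via the outer parallel body, and bound the second term below via the inner parallel body $\{x:x+\tfrac12 B_\infty^n\subseteq K\}\supseteq (1-\tfrac{\sqrt n}{2r(K)})K$. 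Your inner-body argument is clean and in fact yields the slightly sharper constant $(1-\tfrac{\sqrt n}{2r(K)})^n$; the paper's route has the advantage of never needing the lower bound on $G_n(K)$ and of working directly on the shell, which is why the factor $R(K)/r(K)$ appears there.
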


\begin{proof}
Let us first prove a).	By \eqref{eq:LatticePointAndVolume},
\begin{align*}
	|G_n((1+t)K\setminus K)|&\leq \left|(1+t)K\setminus K + \frac{1}{2}B_\infty^n\right|\leq \left|(1+t)K\setminus K+\frac{\sqrt{n}}{2}B_2^n\right|\\
	&\leq \left|(1+t)K\setminus K+\frac{\sqrt{n}}{2r(K)}K\right|.
\end{align*}
Now, since
$$
-K\subseteq R(K)B_2^n\subseteq\frac{R(K)}{r(K)}K,
$$
we have
$$
-\frac{\sqrt{n}}{2r(K)}K\subseteq\frac{\sqrt{n}R(K)}{2r^2(K)}K.
$$
Recall that $\frac{\sqrt{n}R(K)}{2r^2(K)}\leq 1$. It follows that
\begin{align*}
(1+t)K\setminus K+\frac{\sqrt{n}}{2r(K)}K&\subseteq \left(1+t+\frac{\sqrt{n}}{2r(K)}\right)K\setminus\left(1-\frac{\sqrt{n}R(K)}{2r^2(K)}\right)K\\
\end{align*}
and then
\begin{align*}
	\left|(1+t)K\setminus K+\frac{\sqrt{n}}{2r(K)}K\right|&\leq \left|\left(1+t+\frac{\sqrt{n}}{2r(K)}\right)K\setminus\left(1-\frac{\sqrt{n}R(K)}{2r^2(K)}\right)K\right|\\
	&= \left(\left(1+t+\frac{\sqrt{n}}{2r(K)}\right)^{\! n}-\left(1-\frac{\sqrt{n}R(K)}{2r^2(K)}\right)^{\! n}\right)|K|.\\
\end{align*}

Let us now prove b). We have that if $X$ and $Y$ are random vectors uniformly distributed on $K\cap\Z^n$ and $(K+C_n)\cap\Z^n$ respectively, for any $p\geq1$,
\begin{align*}
	D_0(K, p) &=\frac{1+\left(\E|\langle Y, e_n\rangle|^p\right)^\frac{1}{p}\left(\frac{G_n(K+C_n)}{G_n(K)}\right)^\frac{1}{p}}{\left(\E|\langle X, e_n\rangle|^p\right)^\frac{1}{p}}\\
	&=\frac{1}{\left(\E|\langle X, e_n\rangle|^p\right)^\frac{1}{p}}+\left(\frac{\sum_{x\in(K+C_n)\cap\Z^n}|\langle x, e_n\rangle|^p}{\sum_{x\in K\cap\Z^n}|\langle x,e_n\rangle|^p}\right)^\frac{1}{p}
\end{align*}
On the one hand, by H\"older's inequality and \eqref{eq:LatticePointAndVolume},
\begin{align*}
	\left(\E|\langle X, e_n\rangle|^p\right)^\frac{1}{p} &\geq \E|\langle X, e_n\rangle|= \frac{\sum_{x\in K\cap \Z^n} |\langle x, e_n\rangle|}{G_n(K)}\geq \frac{\sum_{x\in r(K)B_2^n\cap \Z^n} |\langle x, e_n\rangle|}{|K+\frac{1}{2}B_\infty^n|}\\
	&\geq \frac{\sum_{x\in r(K)B_2^n\cap \Z^n} |\langle x, e_n\rangle|}{|K+\frac{\sqrt{n}}{2}B_2^n|}.
\end{align*}	
On the other hand,
\allowdisplaybreaks[4]
\begin{align*}
	&\left(\frac{\sum_{x\in(K+C_n)\cap\Z^n}|\langle x, e_n\rangle|^p}{\sum_{x\in K\cap\Z^n}|\langle x,e_n\rangle|^p}\right)^{\!\frac{1}{p}}=\left(1+\frac{\sum_{x\in((K+C_n)\setminus K)\cap\Z^n}|\langle x, e_n\rangle|^p}{\sum_{x\in K\cap\Z^n}|\langle x,e_n\rangle|^p}\right)^{\!\frac{1}{p}}\\
	&\qquad\qquad\qquad \leq 1+ \left(\frac{\sum_{x\in((K+C_n)\setminus K)\cap\Z^n}|\langle x, e_n\rangle|^p}{\sum_{x\in K\cap\Z^n}|\langle x,e_n\rangle|^p}\right)^{\!\frac{1}{p}}\\
	&\qquad\qquad\qquad \leq 1+ \left(\frac{\sum_{x\in((K+\sqrt{n}B_2^n)\setminus K)\cap\Z^n}|\langle x, e_n\rangle|^p}{\sum_{x\in r(K)B_2^n\cap\Z^n}|\langle x,e_n\rangle|^p}\right)^{\!\frac{1}{p}}\\
	&\qquad\qquad\qquad \leq 1+ \left(\frac{\sum_{x\in((K+\frac{\sqrt{n}}{r(K)}K)\setminus K)\cap\Z^n}|\langle x, e_n\rangle|^p}{\sum_{x\in r(K)B_2^n\cap\Z^n}|\langle x,e_n\rangle|^p}\right)^{\!\frac{1}{p}}\\
	&\qquad\qquad\qquad \leq 1+\frac{h_{(1+\frac{\sqrt{n}}{r(K)})K}(e_n)\,G_n\left(\left(1+\frac{\sqrt{n}}{r(K)}\right)K\setminus K\right)^{\frac{1}{p}}}{\left(\sum_{x\in r(K)B_2^n\cap\Z^n}|\langle x,e_n\rangle|^p\right)^\frac{1}{p}}\\
&\qquad\qquad\qquad \leq 1+\frac{\left(1+\frac{\sqrt{n}}{r(K)}\right)R(K)\,G_n\left(\left(1+\frac{\sqrt{n}}{r(K)}\right)K\setminus K\right)^\frac{1}{p}}{\left(\frac{1}{G_n(r(K)B_2^n)}\sum_{x\in r(K)B_2^n\cap\Z^n}|\langle x,e_n\rangle|^p\right)^{\frac{1}{p}} G_n( r(K)B_2^n)^\frac{1}{p}}\\
&\qquad\qquad\qquad \leq 1+\frac{\left(1+\frac{\sqrt{n}}{r(K)}\right)R(K)\,G_n\left(\left(1+\frac{\sqrt{n}}{r(K)}\right)K\setminus K\right)^\frac{1}{p}}{\left(\frac{1}{G_n(r(K)B_2^n)}\sum_{x\in r(K)B_2^n\cap\Z^n}|\langle x,e_n\rangle|\right) G_n( r(K)B_2^n)^\frac{1}{p}}.
\end{align*}	
By a), we get
\begin{align*}
	\left|G_n\left(\left(1+\frac{\sqrt{n}}{r(K)}\right)K\setminus K\right)\right|\leq \left(\left(1+\frac{3\sqrt{n}}{2r(K)}\right)^n-\left(1-\frac{\sqrt{n}R(K)}{2r^2(K)}\right)^n\right)|K|
\end{align*}
and the statement follows.
\end{proof}

Let us now prove Lemma \ref{lem:LimitC}, i.e., that, for every convex body $K\subseteq\R^n$ with $0\in\operatorname{int}(K)$ and any $q\geq1$, we have
$$
\lim_{\lambda\to\infty}D_{q}(\lambda K)=1.
$$

\begin{proof}[Proof of Lemma \ref{lem:LimitC}]

Let $K\subseteq\R^n$ be a convex body with $0\in\operatorname{int}(K)$, $p\geq 1$ and $U\in O(n)$. Notice that if $\lambda\geq r(K)^{-1}$ we have $B_2^n\subset \lambda UK$
 and then $\displaystyle{\max_{x\in\lambda UK\cap\Z^n}|\langle x,e_n\rangle|\geq 1}$.
 Therefore, if $\lambda\geq r(K)^{-1}$ the number $D_0(\lambda UK, p)$ is well defined. Since $r(\lambda UK)=\lambda r(K)$, $R(\lambda UK)=\lambda R(K)$ and $|\lambda UK|=\lambda^n|K|$, by Lemma \ref{lemma:C0bdd} we have that, whenever $\lambda\geq\max\left\{r(K)^{-1},\frac{\sqrt{n}R(K)}{2r^2(K)}\right\}$,

\begin{align*}
	1\leq D_0(\lambda UK, p) &\leq  1+\frac{|\lambda UK+\frac{\sqrt{n}}{2}B_2^n|}{\sum_{x\in \lambda r(K)B_2^n\cap \Z^n} |\langle x, e_n\rangle|}\\
	&\quad +\frac{\left(\lambda+\frac{\sqrt{n}}{r(K)}\right) R(K)\,\left(\left(\lambda+\frac{3\sqrt{n}}{2r(K)}\right)^n-\left(\lambda-\frac{\sqrt{n}R(K)}{2r^2(K)}\right)^n\right)^{\frac{1}{p}}|K|^\frac{1}{p}}{\left(\frac{1}{G_n(\lambda r(K)B_2^n)}\sum_{x\in \lambda r(K)B_2^n\cap\Z^n}|\langle x,e_n\rangle|\right) G_n(\lambda r(K)B_2^n)^\frac{1}{p}}.\\
\end{align*}
Thus, for every $\lambda\geq\max\left\{r(K)^{-1},\frac{\sqrt{n}R(K)}{2r^2(K)}\right\}$,

\begin{align*}1\leq D_{q}(\lambda K)&=\sup_{\subs{U\in O(n)}{1\leq p\leq q}}D_0(\lambda UK,p)\\
	&\leq 	1+\sup_{U\in O(n)}\frac{|\lambda UK+\frac{\sqrt{n}}{2}B_2^n|}{\sum_{x\in \lambda r(K)B_2^n\cap \Z^n} |\langle x, e_n\rangle|}\\
	&+\sup_{\subs{U\in O(n)}{1\leq p\leq q}}\frac{\left(\lambda+\frac{\sqrt{n}}{r(K)}\right) R(K)\,\left(\left(\lambda+\frac{3\sqrt{n}}{2r(K)}\right)^n-\left(\lambda-\frac{\sqrt{n}R(K)}{2r^2(K)}\right)^n\right)^{\frac{1}{p}}|K|^\frac{1}{p}}{\left(\frac{1}{G_n(\lambda r(K)B_2^n)}\sum_{x\in \lambda r(K)B_2^n\cap\Z^n}|\langle x,e_n\rangle|\right) G_n(\lambda r(K)B_2^n)^\frac{1}{p}}.
\end{align*}

Let us see that both supremums tend to $0$ as $\lambda\to\infty$. First,

\begin{eqnarray*}
	\sup_{U\in O(n)}\frac{|\lambda UK+\frac{\sqrt{n}}{2}B_2^n|}{\sum_{x\in \lambda r(K)B_2^n\cap \Z^n} |\langle x, e_n\rangle|}&=&\sup_{U\in O(n)}\frac{|\lambda U(K+\frac{\sqrt{n}}{2}B_2^n)|}{\sum_{x\in \lambda r(K)B_2^n\cap \Z^n} |\langle x, e_n\rangle|}\\
	&=&
	\frac{\lambda^n\left| K+\frac{\sqrt{n}}{2\lambda}B_2^n\right|}{\sum_{x\in \lambda r(K)B_2^n\cap\Z^n}|\langle x,e_n\rangle|}\\
	&=&\frac{\frac{1}{\lambda}\left|K+\frac{\sqrt{n}}{2\lambda}B_2^n\right|}{\frac{1}{\lambda^n}\sum_{x\in \lambda r(K)B_2^n\cap\Z^n}|\langle\frac{x}{\lambda},e_n\rangle|}.\cr
\end{eqnarray*}

Since $\displaystyle{\lim_{\lambda\to\infty}\left|K+\frac{\sqrt{n}}{2\lambda}B_2^n\right|=|K|}$ and, by \eqref{eq:ApproachingIntegralWithG},
\begin{align*}\lim_{\lambda\to\infty}\frac{1}{\lambda^n}\sum_{x\in\lambda r(K)B_2^n\cap\Z^n}\left|\left\langle \frac{x}{\lambda},e_n\right\rangle\right|&=\lim_{\lambda\to\infty}\frac{1}{\lambda^n}\sum_{x\in r(K)B_2^n\cap\frac{1}{\lambda}\Z^n}|\langle x,e_n\rangle|\\
	&=\int_{r(K)B_2^n}|\langle x,e_n\rangle|dx,\end{align*}
we get that
$$
\lim_{\lambda\to\infty}\frac{\frac{1}{\lambda}\left|K+\frac{\sqrt{n}}{2\lambda}B_2^n\right|}{\frac{1}{\lambda^n}\sum_{x\in r(K)B_2^n\cap\frac{1}{\lambda}\Z^n}|\langle x,e_n\rangle|}=0
$$
and then
$$
\lim_{\lambda\to\infty}\sup_{U\in O(n)}\frac{|\lambda UK+\frac{\sqrt{n}}{2}B_2^n|}{\sum_{x\in \lambda r(K)B_2^n\cap\Z^n}|\langle x,e_n\rangle|}=0.
$$
Let us now see that
$$
\lim_{\lambda\to\infty}\sup_{\subs{U\in O(n)}{1\leq p\leq q}}\frac{\left(\lambda+\frac{\sqrt{n}}{r(K)}\right) R(K)\,\left(\left(\lambda+\frac{3\sqrt{n}}{2r(K)}\right)^n-\left(\lambda-\frac{\sqrt{n}R(K)}{2r^2(K)}\right)^n\right)^{\frac{1}{p}}|K|^\frac{1}{p}}{\left(\frac{1}{G_n(\lambda r(K)B_2^n)}\sum_{x\in \lambda r(K)B_2^n\cap\Z^n}|\langle x,e_n\rangle|\right) G_n(\lambda r(K)B_2^n)^\frac{1}{p}}=0.
$$
Since
\begin{eqnarray*}
&&\lim_{\lambda\to\infty}\frac{\left(\left(\lambda+\frac{3\sqrt{n}}{2r(K)}\right)^n-\left(\lambda-\frac{\sqrt{n}R(K)}{2r^2(K)}\right)^n\right)|K|}{G_n\left(\lambda r(K)B_2^n\right)}\cr
&=&\lim_{\lambda\to\infty}\frac{\frac{1}{\lambda^n}\left(\left(\lambda+\frac{3\sqrt{n}}{2r(K)}\right)^n-\left(\lambda-\frac{\sqrt{n}R(K)}{2r^2(K)}\right)^n\right)|K|}{\frac{1}{\lambda^n}G_n\left(\lambda r(K)B_2^n\right)}\cr
&=&\frac{0}{|r(K)B_2^n|}=0,
\end{eqnarray*}
there exists $\lambda_1>0$ such that if $\lambda\geq\lambda_1$ then $\frac{\left(\left(\lambda+\frac{3\sqrt{n}}{2r(K)}\right)^n-\left(\lambda-\frac{\sqrt{n}R(K)}{2r^2(K)}\right)^n\right)|K|}{G_n\left(\lambda r(K)B_2^n\right)}\leq1$ and, for every $1\leq p\leq q$
$$
\left(\frac{\left(\lambda+\frac{3\sqrt{n}}{2r(K)}\right)^n-\left(\lambda-\frac{\sqrt{n}R(K)}{2r^2(K)}\right)^n}{G_n\left(\lambda r(K)B_2^n\right)}|K|\right)^{\!\frac{1}{p}}\leq\left(\frac{\left(\lambda+\frac{3\sqrt{n}}{3r(K)}\right)^n-\left(\lambda-\frac{\sqrt{n}}{r(K)}\right)^n}{G_n\left(\lambda r(K)B_2^n\right)}|K|\right)^{\!\frac{1}{q}}.
$$
Therefore, for every $\lambda\geq\max\left\{r(K)^{-1},\frac{\sqrt{n}R(K)}{2r^2(K)},\lambda_1\right\}$,
\begin{eqnarray*}
&&\sup_{\subs{U\in O(n)}{1\leq p\leq q}}\frac{\left(\lambda+\frac{\sqrt{n}}{r(K)}\right) R(K)\,\left(\left(\lambda+\frac{3\sqrt{n}}{2r(K)}\right)^n-\left(\lambda-\frac{\sqrt{n}R(K)}{2r^2(K)}\right)^n\right)^{\frac{1}{p}}|K|^\frac{1}{p}}{\left(\frac{1}{G_n(\lambda r(K)B_2^n)}\sum_{x\in \lambda r(K)B_2^n\cap\Z^n}|\langle x,e_n\rangle|\right) G_n(\lambda r(K)B_2^n)^\frac{1}{p}}\\
&\leq&\frac{\left(1+\frac{\sqrt{n}}{\lambda r(K)}\right)R(K)}{\frac{1}{G_n(\lambda r(K)B_2^n)}\sum_{x\in  \lambda r(K)B_2^n\cap\Z^n}|\langle\frac{x}{\lambda},e_n\rangle|}
\left(\frac{\left(\lambda+\frac{3\sqrt{n}}{2r(K)}\right)^n-\left(\lambda-\frac{\sqrt{n}R(K)}{2r^2(K)}\right)^n}{G_n\left(\lambda r(K)B_2^n\right)}|K|\right)^{\!\frac{1}{q}}\\
&=&\frac{\left(1+\frac{\sqrt{n}}{\lambda r(K)}\right)R(K)}{\frac{1}{\frac{1}{\lambda^n}G_n(\lambda r(K)B_2^n)}\frac{1}{\lambda^n}\sum_{x\in  \lambda r(K)B_2^n\cap\Z^n}|\langle\frac{x}{\lambda},e_n\rangle|}\left(\frac{\left(\lambda+\frac{3\sqrt{n}}{2r(K)}\right)^n-\left(\lambda-\frac{\sqrt{n}R(K)}{2r^2(K)}\right)^n}{G_n\left(\lambda r(K)B_2^n\right)}|K|\right)^{\!\frac{1}{q}}.\cr
\end{eqnarray*}
Since
\begin{itemize}
\item $\displaystyle{\lim_{\lambda\to\infty}\left(1+\frac{\sqrt{n}}{\lambda r(K)}\right)=1}$,
\item $\displaystyle{\lim_{\lambda\to\infty}\frac{\left(\lambda+\frac{3\sqrt{n}}{2r(K)}\right)^n-\left(\lambda-\frac{\sqrt{n}R(K)}{2r^2(K)}\right)^n}{G_n\left(\lambda r(K)B_2^n\right)}|K|=0}$,
\item $\displaystyle{\lim_{\lambda\to\infty}\frac{1}{\lambda^n}\sum_{x\in \lambda r(K)B_2^n\cap\Z^n}\left|\left\langle \frac{x}{\lambda},e_n\right\rangle\right|=\int_{r(K)B_2^n}|\langle x,e_n\rangle|dx}$, and
\item $\displaystyle{\lim_{\lambda\to\infty}\frac{1}{\lambda^n}G_n(\lambda r(K)B_2^n)=|r(K)B_2^n|}$,
\end{itemize}
we have that
$$
\lim_{\lambda\to\infty}\frac{\left(1+\frac{\sqrt{n}}{\lambda r(K)}\right)R(K)}{\frac{1}{\frac{1}{\lambda^n}G_n(\lambda r(K)B_2^n)}\frac{1}{\lambda^n}\sum_{x\in  \lambda r(K)B_2^n\cap\Z^n}|\langle\frac{x}{\lambda},e_n\rangle|}\left(\frac{\left(\lambda+\frac{3\sqrt{n}}{3r(K)}\right)^n-\left(\lambda-\frac{\sqrt{n}}{r(K)}\right)^n}{G_n\left(\lambda r(K)B_2^n\right)}|K|\right)^{\!\frac{1}{q}}=0
$$
and then
\[\lim_{\lambda\to\infty}\sup_{\subs{U\in O(n)}{1\leq p\leq q}}\frac{\left(\lambda+\frac{\sqrt{n}}{r(K)}\right) R(K)\,\left(\left(\lambda+\frac{3\sqrt{n}}{2r(K)}\right)^n-\left(\lambda-\frac{\sqrt{n}R(K)}{2r^2(K)}\right)^n\right)^{\frac{1}{p}}|K|^\frac{1}{p}}{\left(\frac{1}{G_n(\lambda r(K)B_2^n)}\sum_{x\in \lambda r(K)B_2^n\cap\Z^n}|\langle x,e_n\rangle|\right) G_n(\lambda r(K)B_2^n)^\frac{1}{p}}=0.\]
Therefore,
$$
\lim_{\lambda\to\infty}D_{q}(\lambda K)=1.
$$
\end{proof}

As a consequence of Theorem \ref{thm:DiscreteBorell} and Lemma \ref{lem:LimitC}, we can obtain inequality \eqref{eq:ContinuousBorellOneDirection} for convex bodies containing the origin in its interior.

\begin{cor}\label{cor:Continuous Borell}
There exists an absolute constant $C>0$ such that for every convex body $K\subseteq\R^n$ with $0\in\operatorname{int}K$, if $X$ is a random vector uniformly distributed on $K$, then for any $1\leq p\leq q$
$$
\left(\E|\langle X,e_n\rangle|^p\right)^\frac{1}{p}\leq\left(\E|\langle X,e_n\rangle|^q\right)^\frac{1}{q}\leq C\frac{q}{p}\left(\E|\langle X,e_n\rangle|^p\right)^\frac{1}{p}.
$$
\end{cor}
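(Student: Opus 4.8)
The plan is to obtain Corollary~\ref{cor:Continuous Borell} from Theorem~\ref{thm:DiscreteBorell} by a dilation-and-limit argument, which is precisely what Theorem~\ref{thm:LimitC} was set up to permit. Fix a convex body $K\subseteq\R^n$ with $0\in\operatorname{int}K$ and exponents $1\le p\le q$; the left-hand inequality is just Hölder's inequality, so only the right-hand one needs work. Let $X$ be a random vector uniformly distributed on $K$ and, for $\lambda>0$, let $X_\lambda$ be uniformly distributed on $\lambda K\cap\Z^n$. Whenever $\lambda\ge r(K)^{-1}$ we have $B_2^n\subseteq\lambda K$, so $e_n\in\lambda K\cap\Z^n$ and $\max_{x\in\lambda K\cap\Z^n}|\langle x,e_n\rangle|\ge1$; hence Theorem~\ref{thm:DiscreteBorell} applies to the body $\lambda K$ and yields, for every such $\lambda$,
$$
\left(\E|\langle X_\lambda,e_n\rangle|^q\right)^{\frac1q}\le C\frac{q}{p}\,C_0(\lambda K,p)\left(\E|\langle X_\lambda,e_n\rangle|^p\right)^{\frac1p}.
$$

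Next I would divide this inequality by $\lambda$ and identify the limits of both sides as $\lambda\to\infty$. For any $r\ge1$, substituting $x=\lambda y$ gives
$$
\frac{1}{\lambda^r}\,\E|\langle X_\lambda,e_n\rangle|^r=\frac{\lambda^n}{G_n(\lambda K)}\cdot\frac{1}{\lambda^n}\sum_{y\in K\cap\frac1\lambda\Z^n}|\langle y,e_n\rangle|^r,
$$
and since $y\mapsto|\langle y,e_n\rangle|^r$ is continuous, hence Riemann-integrable on $K$, equations~\eqref{eq:ApproachingVolByDilationsWithG} and~\eqref{eq:ApproachingIntegralWithG} show that the right-hand side converges to $\frac{1}{|K|}\int_K|\langle y,e_n\rangle|^r\,dy=\E|\langle X,e_n\rangle|^r$. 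Applying this with $r=q$ and $r=p$, the rescaled inequality becomes
$$
\left(\frac{1}{\lambda^q}\E|\langle X_\lambda,e_n\rangle|^q\right)^{\frac1q}\le C\frac{q}{p}\,C_0(\lambda K,p)\left(\frac{1}{\lambda^p}\E|\langle X_\lambda,e_n\rangle|^p\right)^{\frac1p},
$$
where the parenthesized factors on both sides now have finite limits.

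Finally I would let $\lambda\to\infty$: the left-hand side tends to $\left(\E|\langle X,e_n\rangle|^q\right)^{1/q}$, the bracketed factor on the right to $\left(\E|\langle X,e_n\rangle|^p\right)^{1/p}$, and $C_0(\lambda K,p)\to1$ by Remark~\ref{rmk:LowerboundC0}, which records this consequence of Theorem~\ref{thm:LimitC} (taking there $U$ equal to the identity). This produces $\left(\E|\langle X,e_n\rangle|^q\right)^{1/q}\le C\frac{q}{p}\left(\E|\langle X,e_n\rangle|^p\right)^{1/p}$ with the same absolute constant $C$ as in Theorem~\ref{thm:DiscreteBorell}, which finishes the proof. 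I do not expect a genuine obstacle: the argument is purely a passage to the limit, its only nontrivial analytic ingredient — the convergence of the Riemann sums over $\tfrac1\lambda\Z^n$ to the corresponding integrals — is already available as~\eqref{eq:ApproachingIntegralWithG}, and the single point demanding care is keeping track of the normalizing powers $\lambda^p,\lambda^q,\lambda^n$ throughout the rescaling.
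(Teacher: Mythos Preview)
Your proposal is correct and follows essentially the same approach as the paper's own proof: apply Theorem~\ref{thm:DiscreteBorell} to $\lambda K$, rescale by $\lambda$, use \eqref{eq:ApproachingVolByDilationsWithG} and \eqref{eq:ApproachingIntegralWithG} to pass to the continuous moments, and invoke Remark~\ref{rmk:LowerboundC0} (from Theorem~\ref{thm:LimitC}) to send $C_0(\lambda K,p)\to1$. The exposition differs only cosmetically.
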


\begin{proof}
Let $K\subseteq\R^n$ be a convex body with $0\in\operatorname{int}K$ and $1\leq p\leq q$. Let $\lambda\geq r(K)^{-1}>0$. Then, we have that
 $\displaystyle{\max_{x\in\lambda K\cap\Z^n}|\langle x,e_n\rangle|\geq1}$. By Theorem \ref{thm:DiscreteBorell}, if $X_\lambda$ is a random vector uniformly distributed on $\lambda K\cap\Z^n$,
$$
\left(\E|\langle X_\lambda, e_n\rangle|^p\right)^\frac{1}{p}\leq\left(\E|\langle X_\lambda, e_n\rangle|^q\right)^\frac{1}{q}\leq C\frac{q}{p}D_0(\lambda K,p)\left(\E|\langle X_\lambda, e_n\rangle|^p\right)^\frac{1}{p},
$$
where $C>0$ is an absolute constant.
Equivalently,
$$
\left(\E\left|\left\langle \frac{X_\lambda}{\lambda}, e_n\right\rangle\right|^p\right)^\frac{1}{p}\leq\left(\E\left|\left\langle \frac{X_\lambda}{\lambda}, e_n\right\rangle\right|^q\right)^\frac{1}{q}
\leq C\frac{q}{p}D_0(\lambda K,p)\left(\E\left|\left\langle \frac{X_\lambda}{\lambda}, e_n\right\rangle\right|^p\right)^\frac{1}{p}.
$$
On the one hand, for any $p\geq 1$ we have
\begin{eqnarray*}
\E\left|\left\langle \frac{X_\lambda}{\lambda}, e_n\right\rangle\right|^p&=&\frac{1}{G_n(\lambda K)}\sum_{x\in\lambda K\cap\Z^n}\left|\left\langle\frac{x}{\lambda},e_n\right\rangle\right|^p\\
&=&\frac{1}{\frac{1}{\lambda^n}G_n(\lambda K)}\frac{1}{\lambda^n}\sum_{x\in\lambda K\cap\Z^n}\left|\left\langle\frac{x}{\lambda},e_n\right\rangle\right|^p\cr
&=&\frac{1}{\frac{1}{\lambda^n}G_n(\lambda K)}\frac{1}{\lambda^n}\sum_{x\in K\cap\frac{1}{\lambda}\Z^n}\left|\left\langle x,e_n\right\rangle\right|^p,
\end{eqnarray*}
and, by \eqref{eq:ApproachingVolByDilationsWithG} and \eqref{eq:ApproachingIntegralWithG}, taking the limit as $\lambda\to\infty$ we obtain that
$$
\lim_{\lambda\to\infty}\E\left|\left\langle \frac{X_\lambda}{\lambda}, e_n\right\rangle\right|^p=\frac{1}{|K|}\int_{K}|\langle x,e_n\rangle|^pdx=\E|\langle X,e_n\rangle|^p,
$$
where $X$ is a random vector uniformly distributed on $K$. On the other hand, as seen in Remark \ref{rmk:LowerboundC0},
$$
\lim_{\lambda\to\infty} D_0(\lambda K,p)=1
$$
and then, if $X$ is a random vector uniformly distributed on $K$,
$$
\left(\E|\langle X,e_n\rangle|^p\right)^\frac{1}{p}\leq\left(\E|\langle X,e_n\rangle|^q\right)^\frac{1}{q}\leq C\frac{q}{p}\left(\E|\langle X,e_n\rangle|^p\right)^\frac{1}{p}.
$$
\end{proof}

\section{Expected mean width of random polytopes via discrete inequalities}

In this section we will prove Theorems \ref{thm:DiscreteUpperBoundMeanWidth} and \ref{thm:DiscreteLowerBoundMeanWidth} and show how they imply the inequalities in \eqref{eq:MeanWidthRotations} and, equivalently, \eqref{eq:MeanWidthContinuous}. In order to prove Theorem \ref{thm:DiscreteUpperBoundMeanWidth} we prove the following lemma.

\begin{lemma}\label{lem:ProbabilisticEstimateL}
Let $L\subseteq\R^n$ be a convex body with $0\in L$, $N\in\N$ and $a>0$. Let $X_1,\ldots,X_N$ be independent random vectors uniformly distributed on $L\cap\Z^n$. Then, for any $q>0$,
\[\Pro\left(\max_{1\leq i\leq N}|\langle X_i,e_n\rangle|\geq a\left(\E|\langle X_1,e_n\rangle|^q\right)^\frac{1}{q}\right)\leq Na^{-q}.
\]
\end{lemma}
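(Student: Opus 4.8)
The plan is the standard union bound followed by Markov's inequality. First I would record the easy structural facts: since $L$ is bounded, $L\cap\Z^n$ is a finite and nonempty set (nonempty because $0\in L$), so $\E|\langle X_1,e_n\rangle|^q$ is a finite nonnegative number; we may assume it is positive, since if it vanishes then $\langle X_1,e_n\rangle=0$ almost surely and the statement is degenerate. Because $X_1,\dots,X_N$ are independent and identically distributed, a union bound gives, for every $t>0$,
\[
\Pro\left(\max_{1\le i\le N}|\langle X_i,e_n\rangle|\ge t\right)\le\sum_{i=1}^N\Pro\big(|\langle X_i,e_n\rangle|\ge t\big)=N\,\Pro\big(|\langle X_1,e_n\rangle|\ge t\big).
\]

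Next, since $x\mapsto x^q$ is strictly increasing on $[0,\infty)$ for $q>0$, we have the identity of events $\{|\langle X_1,e_n\rangle|\ge t\}=\{|\langle X_1,e_n\rangle|^q\ge t^q\}$, so Markov's inequality applied to the nonnegative random variable $|\langle X_1,e_n\rangle|^q$ yields
\[
\Pro\big(|\langle X_1,e_n\rangle|\ge t\big)\le\frac{\E|\langle X_1,e_n\rangle|^q}{t^q}.
\]
Finally, I would choose $t=a\left(\E|\langle X_1,e_n\rangle|^q\right)^{1/q}$, which makes the right-hand side above equal to $a^{-q}$; substituting into the union bound gives exactly $\Pro\left(\max_{1\le i\le N}|\langle X_i,e_n\rangle|\ge a\left(\E|\langle X_1,e_n\rangle|^q\right)^{1/q}\right)\le Na^{-q}$.

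There is no genuine obstacle here; the argument is three lines. The only points worth a half-sentence each are the finiteness of the moment $\E|\langle X_1,e_n\rangle|^q$ (immediate from boundedness of $L$, so that Markov applies) and the observation that the identical distribution of the $X_i$ is what turns the union bound into the clean factor $N$. I would write the proof in precisely the form of the two displays above.
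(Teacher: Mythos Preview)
Your proposal is correct and matches the paper's proof essentially line for line: union bound to reduce to a single $X_1$, rewrite the event in terms of $|\langle X_1,e_n\rangle|^q$, and apply Markov's inequality to obtain the factor $a^{-q}$. The only cosmetic difference is that you carry a generic threshold $t$ and specialize at the end, whereas the paper sets $t=a\left(\E|\langle X_1,e_n\rangle|^q\right)^{1/q}$ from the start.
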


\begin{proof}
By the union bound, we have that
\begin{align*}
\Pro\left(\max_{1\leq i\leq N}|\langle X_i,e_n\rangle|\geq a\left(\E|\langle X_1,e_n\rangle|^q\right)^\frac{1}{q}\right)
&\leq N\Pro\left(|\langle X_1,e_n\rangle|\geq a\left(\E|\langle X_1,e_n\rangle|^q\right)^\frac{1}{q}\right)\\
&=N\Pro\left(|\langle X_1,e_n\rangle|^q\geq a^q\E|\langle X_1,e_n\rangle|^q\right).
\end{align*}
By Markov's inequality
$$
\Pro\left(|\langle X_1,e_n\rangle|^q\geq a^q\E|\langle X_1,e_n\rangle|^q\right)\leq a^{-q}.
$$
Therefore,
$$
\Pro\left(\max_{1\leq i\leq N}|\langle X_i,e_n\rangle|\geq a\left(\E|\langle X_1,e_n\rangle|^q\right)^\frac{1}{q}\right)\leq Na^{-q}.
$$
\end{proof}

\begin{proof}[Proof of Theorem \ref{thm:DiscreteUpperBoundMeanWidth}]
For any $a>0$ and any $q>1$, let us call
$$
h_q(U):=\left(\frac{1}{G_n(UK)}\sum_{x\in UK\cap\Z^n}|\langle x,e_n\rangle|^q\right)^\frac{1}{q}=\left(\E|\langle X_U,e_n\rangle|^q\right)^\frac{1}{q},
$$
where $X_U$ is a random vector uniformly distributed on $UK\cap\Z^n$. Let also $A$ and $B$ be the following subsets of $(\Z^n)^N\times O(n)$:
\begin{align*}
A&=\left\{(x_1,\dots,x_N,U)\,:\,x_i\in UK\cap\Z^n,\,\forall 1\leq i\leq N,\,\max_{1\leq i\leq N}|\langle x_i,e_n\rangle|\geq ah_q(U)\right\},\\
B&=\left\{(x_1,\dots,x_N,U)\,:\,x_i\in UK\cap\Z^n,\,\forall 1\leq i\leq N,\,\max_{1\leq i\leq N}|\langle x_i,e_n\rangle|< ah_q(U)\right\}.
\end{align*}
Notice that $A\cap B=\emptyset$ and
$$
A\cup B=\{(x_1,\dots,x_N,U)\,:\,x_i\in UK\cap\Z^n,\,\forall 1\leq i\leq N\}.
$$
Therefore,
\begin{eqnarray*}
&&\int_{O(n)}\frac{1}{(G_n(UK))^N}\sum_{x_1,\dots,x_N\in UK\cap\Z^n}\max_{1\leq i\leq N}|\langle x_i,e_n\rangle|d\nu(U)\cr
&=&\int_{O(n)}\frac{1}{(G_n(UK))^N}\sum_{x_1,\dots,x_N\in UK\cap\Z^n}\max_{1\leq i\leq N}|\langle x_i,e_n\rangle|\chi_{A}(x_1,\dots,x_N,U)d\nu(U)\cr
&+&\int_{O(n)}\frac{1}{(G_n(UK))^N}\sum_{x_1,\dots,x_N\in UK\cap\Z^n}\max_{1\leq i\leq N}|\langle x_i,e_n\rangle|\chi_{B}(x_1,\dots,x_N,U)d\nu(U).
\end{eqnarray*}
On the one hand,
\begin{eqnarray*}
&&\int_{O(n)}\frac{1}{(G_n(UK))^N}\sum_{x_1,\dots,x_N\in UK\cap\Z^n}\max_{1\leq i\leq N}|\langle x_i,e_n\rangle|\chi_{B}(x_1,\dots,x_N,U)d\nu(U)\cr
&\leq&\int_{O(n)}\frac{1}{(G_n(UK))^N}\sum_{x_1,\dots,x_N\in UK\cap\Z^n}ah_q(U)\chi_{B}(x_1,\dots,x_N,U)d\nu(U)\cr
&\leq&\int_{O(n)}\frac{1}{(G_n(UK))^N}\sum_{x_1,\dots,x_N\in UK\cap\Z^n}ah_q(U)d\nu(U)\cr
&=&a\int_{O(n)}h_q(U)d\nu(U).
\end{eqnarray*}
On the other hand, let us call, for every $k\in\N\cup\{0\}$, $A_k$ the following subset of $(\Z^n)^N\times O(n)$:
$$
A_k=\left\{(x_1,\dots,x_N,U)\in A\,:\,2^kah_q(U)\leq\max_{1\leq i\leq N}|\langle x_i,e_n\rangle|<2^{k+1}ah_q(U)\right\}
$$
Notice that if $k_1\neq k_2$ then $A_{k_1}\cap A_{k_2}=\emptyset$ and $\displaystyle{A=\bigcup_{k=0}^\infty A_k}$. Then,
\begin{eqnarray*}
&&\int_{O(n)}\frac{1}{(G_n(UK))^N}\sum_{x_1,\dots,x_N\in UK\cap\Z^n}\max_{1\leq i\leq N}|\langle x_i,e_n\rangle|\chi_{A}(x_1,\dots,x_N,U)d\nu(U)\cr
&=&\int_{O(n)}\frac{1}{(G_n(UK))^N}\sum_{x_1,\dots,x_N\in UK\cap\Z^n}\max_{1\leq i\leq N}|\langle x_i,e_n\rangle|\sum_{k=0}^\infty\chi_{A_k}(x_1,\dots,x_N,U)d\nu(U)\cr
&\leq&\int_{O(n)}\frac{1}{(G_n(UK))^N}\sum_{x_1,\dots,x_N\in UK\cap\Z^n}\sum_{k=0}^\infty 2^{k+1}ah_q(U)\chi_{A_k}(x_1,\dots,x_N,U)d\nu(U)\cr
&=&2a\int_{O(n)}h_q(U)\sum_{k=0}^\infty 2^k\frac{1}{(G_n(UK))^N}\sum_{x_1,\dots,x_N\in UK\cap\Z^n}\chi_{A_k}(x_1,\dots,x_N,U)d\nu(U).\cr
\end{eqnarray*}
Applying, for any $U\in O(n)$, Lemma \ref{lem:ProbabilisticEstimateL} with $L=UK$ we have that if $(X_U)_1,\dots (X_U)_N$ are independent random vectors uniformly distributed on $UK\cap\Z^n$, then
\begin{eqnarray*}
&&2a\int_{O(n)}h_q(U)\sum_{k=0}^\infty 2^k\frac{1}{(G_n(UK))^N}\sum_{x_1,\dots,x_N\in UK\cap\Z^n}\chi_{A_k}(x_1,\dots,x_N,U)d\nu(U)\cr
&=&2a\int_{O(n)}h_q(U)\sum_{k=0}^\infty 2^k\Pro\left(2^{k+1}ah_q(U)>\max_{1\leq i\leq N}|\langle (X_U)_i,e_n\rangle|\geq 2^k ah_q(U)\right)d\nu(U)\cr
&\leq&2a\int_{O(n)}h_q(U)\sum_{k=0}^\infty 2^k\Pro\left(\max_{1\leq i\leq N}|\langle (X_U)_i,e_n\rangle|\geq 2^k ah_q(U)\right)d\nu(U)\cr
&\leq&2a\int_{O(n)}h_q(U)\sum_{k=0}^\infty 2^k N(2^ka)^{-q}d\nu(U)\cr
&=&2Na^{1-q}\int_{O(n)}h_q(U)\sum_{k=0}^\infty \frac{1}{2^{(q-1)k}}d\nu(U)\cr
&=&\frac{2Na^{1-q}}{1-\frac{1}{2^{q-1}}}\int_{O(n)} h_q(U)d\nu(U).
\end{eqnarray*}
Taking $a=e$ and $q=\log N\geq\log 3>1$ we have that $\displaystyle{\frac{2}{1-\frac{1}{2^{q-1}}}\leq \frac{2}{1-\frac{1}{2^{\log 3-1}}}=C_1}$ and
\begin{eqnarray*}
&&\int_{O(n)}\frac{1}{(G_n(UK))^N}\sum_{x_1,\dots,x_N\in UK\cap\Z^n}\max_{1\leq i\leq N}|\langle x_i,e_n\rangle|d\nu(U)\cr
&\leq& (e+eC_1)\int_{O(n)} h_{\log N}(U)d\nu(U)\\
&=&(e+eC_1)\int_{O(n)}\left(\frac{1}{G_n(UK)}\sum_{x\in UK\cap\Z^n}|\langle x,e_n\rangle|^{\log N}\right)^\frac{1}{\log N}d\nu(U),\cr
\end{eqnarray*}
which proves Theorem \ref{thm:DiscreteUpperBoundMeanWidth} with $C=e+eC_1$.
\end{proof}

As a consequence, we obtain the right-hand side inequality in \eqref{eq:MeanWidthRotations} and, equivalently, the right-hand side inequality in \eqref{eq:MeanWidthContinuous}.

\begin{cor}\label{cor:ContinuousUpperBoundMeanWidth}
There exists an absolute constant $C>0$ such that for every convex body $K\subseteq\R^n$ with $0\in K$ and every $N\in\N$ with $N\geq 3$,
\begin{eqnarray*}
&&\int_{O(n)}\frac{1}{|UK|^N}\int_{UK}\dots\int_{UK}\max_{1\leq i\leq N}|\langle x_i,e_n\rangle|dx_N\dots dx_1d\nu(U)\cr
&\leq& C\int_{O(n)}\left(\frac{1}{|UK|}\int_{UK}|\langle x,e_n\rangle|^{\log N}dx\right)^\frac{1}{\log N}d\nu(U).
\end{eqnarray*}
\end{cor}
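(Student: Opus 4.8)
The plan is to recover this continuous inequality from its discrete analogue, Theorem \ref{thm:DiscreteUpperBoundMeanWidth}, by the same dilation-and-limit device that was used to pass from Theorem \ref{thm:DiscreteBorell} to Corollary \ref{cor:Continuous Borell}. Fix $N\geq 3$. Since $0\in K$ forces $0\in\lambda K$ for every $\lambda>0$, Theorem \ref{thm:DiscreteUpperBoundMeanWidth} applies to the dilate $\lambda K$ and yields, for every $\lambda>0$,
\begin{eqnarray*}
&&\int_{O(n)}\frac{1}{(G_n(\lambda UK))^N}\sum_{x_1,\dots,x_N\in \lambda UK\cap\Z^n}\max_{1\leq i\leq N}|\langle x_i,e_n\rangle|\,d\nu(U)\cr
&\leq& C\int_{O(n)}\left(\frac{1}{G_n(\lambda UK)}\sum_{x\in \lambda UK\cap\Z^n}|\langle x,e_n\rangle|^{\log N}\right)^\frac{1}{\log N}d\nu(U),
\end{eqnarray*}
where $C$ is the absolute constant furnished by that theorem.

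Next I would rescale both sides by the factor $\frac{1}{\lambda}$. On the left this simply replaces each $|\langle x_i,e_n\rangle|$ by $|\langle x_i/\lambda,e_n\rangle|$; on the right, since the summand is homogeneous of degree $\log N$ in $x$, pulling out $\lambda^{\log N}$ and then taking the $(\log N)$-th root likewise produces a factor $\lambda$. Hence, dividing the displayed inequality by $\lambda$,
\begin{eqnarray*}
&&\int_{O(n)}\frac{1}{(G_n(\lambda UK))^N}\sum_{x_1,\dots,x_N\in \lambda UK\cap\Z^n}\max_{1\leq i\leq N}\left|\left\langle \frac{x_i}{\lambda},e_n\right\rangle\right|d\nu(U)\cr
&\leq& C\int_{O(n)}\left(\frac{1}{G_n(\lambda UK)}\sum_{x\in \lambda UK\cap\Z^n}\left|\left\langle \frac{x}{\lambda},e_n\right\rangle\right|^{\log N}\right)^\frac{1}{\log N}d\nu(U).
\end{eqnarray*}
Both sides are now in exactly the form treated by Lemma \ref{lemma:limite}: the left-hand side is its expression for $m=N$ and $q=1$, and the right-hand side is $C$ times its expression for $m=1$ and $q=\log N$ (which is admissible since $\log N\geq\log 3>1$).

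Finally I would let $\lambda\to\infty$ and apply Lemma \ref{lemma:limite} to each side. The left-hand side then converges to $\int_{O(n)}|UK|^{-N}\int_{UK}\dots\int_{UK}\max_{1\leq i\leq N}|\langle x_i,e_n\rangle|\,dx_N\dots dx_1\,d\nu(U)$ and the right-hand side to $C\int_{O(n)}\left(|UK|^{-1}\int_{UK}|\langle x,e_n\rangle|^{\log N}dx\right)^{1/\log N}d\nu(U)$, which is precisely the asserted inequality. I do not anticipate a genuine obstacle here: the only point requiring care is the interchange of $\lim_{\lambda\to\infty}$ with the integral over $O(n)$, and this is already built into Lemma \ref{lemma:limite}, whose proof carries it out via dominated convergence with the circumradius $R(K)$ as an integrable majorant.
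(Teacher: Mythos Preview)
Your proof is correct and follows essentially the same approach as the paper: apply Theorem \ref{thm:DiscreteUpperBoundMeanWidth} to $\lambda K$, rescale by $1/\lambda$, and pass to the limit using Lemma \ref{lemma:limite} with $(m,q)=(N,1)$ on the left and $(m,q)=(1,\log N)$ on the right.
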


\begin{proof}
By Theorem \ref{thm:DiscreteUpperBoundMeanWidth} we have that there exists $C>0$ such that for any $\lambda>0$, any convex body $K\subseteq\R^n$ with $0\in K$ and every $N\in\N$ with $N\geq 3$ we have that
\begin{eqnarray*}
&&\int_{O(n)}\frac{1}{(G_n(U(\lambda K)))^N}\sum_{x_1,\dots,x_N\in U(\lambda K)\cap\Z^n}\max_{1\leq i\leq N}|\langle x_i,e_n\rangle|d\nu(U)\cr
&\leq& C\int_{O(n)}\left(\frac{1}{G_n(U(\lambda K))}\sum_{x\in U(\lambda K)\cap\Z^n}|\langle x,e_n\rangle|^{\log N}\right)^\frac{1}{\log N}d\nu(U).
\end{eqnarray*}
Equivalently,
\begin{eqnarray*}
&&\int_{O(n)}\frac{1}{(G_n(\lambda UK))^N}\sum_{x_1,\dots,x_N\in \lambda UK\cap\Z^n}\max_{1\leq i\leq N}\left|\left\langle \frac{x_i}{\lambda},e_n\right\rangle\right|d\nu(U)\cr
&\leq& C\int_{O(n)}\left(\frac{1}{G_n(\lambda UK)}\sum_{x\in \lambda UK\cap\Z^n}\left|\left\langle \frac{x}{\lambda},e_n\right\rangle\right|^{\log N}\right)^\frac{1}{\log N}d\nu(U).
\end{eqnarray*}
 Lemma \ref{lemma:limite} with $m=N$ and $q=1$ yields
\begin{eqnarray*}
	&&\lim_{\lambda\to\infty}\int_{O(n)}\frac{1}{(G_n(\lambda UK))^N}\sum_{x_1,\dots,x_N\in \lambda UK\cap\left(\Z^n\right)}\max_{1\leq i\leq N}\left|\left\langle \frac{x_i}{\lambda},e_n\right\rangle\right|d\nu(U)\cr
	&=&\int_{O(n)}\frac{1}{|UK|^N}\int_{UK}\dots\int_{UK}\max_{1\leq i\leq N}|\langle x_i,e_n\rangle|dx_N\dots dx_1d\nu(U).
\end{eqnarray*}
Using now Lemma \ref{lemma:limite} with $m=1$ and $q=\log N$ we get
\begin{eqnarray*}
	&&\lim_{\lambda\to\infty}\int_{O(n)}\left(\frac{1}{G_n(\lambda UK)}\sum_{x\in \lambda UK\cap\Z^n}\left|\left\langle \frac{x}{\lambda},e_n\right\rangle\right|^{\log N}\right)^\frac{1}{\log N}d\nu(U)\cr
	&=&\int_{O(n)}\left(\frac{1}{|UK|}\int_{UK}|\langle x, e_n\rangle|^{\log N}dx\right)^\frac{1}{\log N}d\nu(U).\
\end{eqnarray*}
and the result follows.
\end{proof}

Let us now prove Theorem \ref{thm:DiscreteLowerBoundMeanWidth}.

\begin{proof}[Proof of Theorem \ref{thm:DiscreteLowerBoundMeanWidth}]Let $C>0$ be twice the absolute constant given by Theorem~\ref{thm:DiscreteBorell}
 and let $X$ be a random vector uniformly distributed on $K\cap\Z^n$. Then
$$
\frac{\left(\E|\langle X,e_n\rangle|^{2p}\right)^\frac{1}{2p}}{(\E|\langle X,e_n\rangle|^p)^\frac{1}{p}}\leq CD_0(K,p).
$$
We can assume, without loss of generality, that $C\geq e$. By the Paley-Zygmund inequality, for any $1\leq p\leq q$,
\begin{eqnarray*}
\Pro\left(|\langle X,e_n\rangle|\geq \frac{1}{2}\left(\E|\langle X,e_n\rangle|^p\right)^\frac{1}{p}\right)&=&\Pro\left(|\langle X,e_n\rangle|^p\geq \frac{1}{2^p}\E|\langle X,e_n\rangle|^p\right)\cr
&\geq&\left(1-\frac{1}{2^p}\right)^2\frac{(\E|\langle X,e_n\rangle|^p)^2}{\E|\langle X,e_n\rangle|^{2p}}\cr
&\geq&\frac{1}{4(CD_0(K,p))^{2p}}\geq\frac{1}{4(CD_{q}(K))^{2p}}.
\end{eqnarray*}

Choosing $1\leq p_1=\frac{\log N}{2\log (CD_{q}(K))}\leq\frac{\log N}{2\log C}\leq\log N\leq q$ we obtain
$$
\Pro\left(|\langle X,e_n\rangle|\geq \frac{1}{2}\left(\E|\langle X,e_n\rangle|^{p_1}\right)^\frac{1}{p_1}\right)\geq\frac{1}{4N}.
$$
Therefore, if $X_1,\dots, X_N$ are independent random vectors uniformly distributed on $K\cap\Z^n$, we have that
\begin{eqnarray*}
&&\Pro\left(\max_{1\leq i\leq N}|\langle X_i,e_n\rangle|\geq \frac{1}{2}\left(\E|\langle X,e_n\rangle|^{p_1}\right)^\frac{1}{p_1}\right)\cr
&=&1-\Pro\left(\max_{1\leq i\leq N}|\langle X_i,e_n\rangle|<\frac{1}{2}\left(\E|\langle X,e_n\rangle|^{q}\right)^\frac{1}{p_1}\right)\cr
&=&1-\Pro\left(|\langle X_i,e_n\rangle| <\frac{1}{2}\left(\E|\langle X,e_n\rangle|^{p_1}\right)^\frac{1}{p_1}\right)^N\cr
&=&1-\left(1-\Pro\left(|\langle X_i,e_n\rangle|\geq\frac{1}{2}\left(\E|\langle X,e_n\rangle|^{p_1}\right)^\frac{1}{p_1}\right)\right)^N\cr
&\geq&1-\left(1-\frac{1}{4N}\right)^N=1-e^{N\log\left(1-\frac{1}{4N}\right)}\geq1-e^{-\frac{1}{4}}.
\end{eqnarray*}
Therefore, by Markov's inequality,
\begin{eqnarray*}
\E\max_{1\leq i\leq N}|\langle X_i,e_n\rangle|&\geq&\frac{1}{2}\left(\E|\langle X,e_n\rangle|^{p_1}\right)^\frac{1}{p_1}\cdot \Pro\left(\max_{1\leq i\leq N}|\langle X_i,e_n\rangle|\geq \frac{1}{2}\left(\E|\langle X,e_n\rangle|^{p_1}\right)^\frac{1}{p_1}\right)\cr
&\geq&\frac{1-e^{-\frac{1}{4}}}{2}\left(\E|\langle X,e_n\rangle|^{p_1}\right)^\frac{1}{p_1}.
\end{eqnarray*}
This proves Theorem \ref{thm:DiscreteLowerBoundMeanWidth} with $c=\frac{1-e^{-\frac{1}{4}}}{2}$.
\end{proof}

Let us now see that Corollary \ref{cor:DiscreteLowerBoundMeanWidth} implies the left-hand side inequality in \eqref{eq:MeanWidthRotations}.

\begin{cor}
There exist absolute constants $c_1, c_1^\prime, c>0$ such that if $K\subseteq\R^n$ is a convex body with $0\in\operatorname{int}K$ and $N\in\N$ satisfies that $N\geq c$ then
\begin{eqnarray*}
&&\int_{O(n)}\frac{1}{|UK|^N}\int_{UK}\dots\int_{UK}\max_{1\leq i\leq N}|\langle x_i,e_n\rangle|dx_N\dots dx_1d\nu(U)\cr
&\geq& c_1\int_{O(n)}\left(\frac{1}{|UK|}\int_{UK}|\langle x,e_n\rangle|^{c_1^\prime\log N}dx\right)^\frac{1}{c_1^\prime\log N}d\nu(U).
\end{eqnarray*}
\end{cor}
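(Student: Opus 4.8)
The plan is to derive this continuous inequality from the discrete Theorem~\ref{thm:DiscreteLowerBoundMeanWidth} by the same rescaling-and-limiting scheme used to obtain Corollary~\ref{cor:ContinuousUpperBoundMeanWidth} from Theorem~\ref{thm:DiscreteUpperBoundMeanWidth}. Denote by $C$ the absolute constant appearing in the hypothesis $(CC_q(K))^2\le N\le e^q$ of Theorem~\ref{thm:DiscreteLowerBoundMeanWidth} and by $c$ the absolute constant in its conclusion; recall that one may take $C\ge e$. I would prove the statement with the constant of the present statement taken to be $(2C)^2$. Fix $N\ge(2C)^2$ and put $q=\log N$, so that $q\ge1$ and $N=e^q$. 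By Theorem~\ref{thm:LimitC}, $\lim_{\lambda\to\infty}C_q(\lambda K)=1$, hence there is $\lambda_0=\lambda_0(K,N)>0$ with $B_2^n\subseteq\lambda K$ and $C_q(\lambda K)\le 2$ for every $\lambda\ge\lambda_0$; in particular $(CC_q(\lambda K))^2\le(2C)^2\le N=e^q$, so Theorem~\ref{thm:DiscreteLowerBoundMeanWidth} applies to $\lambda K$ for all such $\lambda$. Writing $p_\lambda=\frac{\log N}{2\log(CC_q(\lambda K))}$ and dividing every point occurring in the inequality by $\lambda$ (the resulting factor $\lambda$ cancels on both sides), this yields for every $\lambda\ge\lambda_0$
\[
\int_{O(n)}\frac{1}{(G_n(\lambda UK))^N}\sum_{x_1,\dots,x_N\in\lambda UK\cap\Z^n}\max_{1\le i\le N}\left|\left\langle\frac{x_i}{\lambda},e_n\right\rangle\right|d\nu(U)\ge c\int_{O(n)}\left(\frac{1}{G_n(\lambda UK)}\sum_{x\in\lambda UK\cap\Z^n}\left|\left\langle\frac{x}{\lambda},e_n\right\rangle\right|^{p_\lambda}\right)^{\frac{1}{p_\lambda}}d\nu(U).
\]

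The delicate point is that the exponent $p_\lambda$ varies with $\lambda$, so Lemma~\ref{lemma:limite} cannot be applied to the right-hand side as it stands. To get around this I would first bound $p_\lambda$ below by a fixed exponent: since $C_q(\lambda K)\le2$ for $\lambda\ge\lambda_0$ we have $p_\lambda\ge p_1:=\frac{\log N}{2\log(2C)}$, and $p_1\ge1$ precisely because $N\ge(2C)^2$. By monotonicity of $L_r$-norms with respect to the uniform probability measure on $\lambda UK\cap\Z^n$, the right-hand side above is bounded below by
\[
c\int_{O(n)}\left(\frac{1}{G_n(\lambda UK)}\sum_{x\in\lambda UK\cap\Z^n}\left|\left\langle\frac{x}{\lambda},e_n\right\rangle\right|^{p_1}\right)^{\frac{1}{p_1}}d\nu(U),
\]
which now carries the fixed exponent $p_1$.

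Finally I would let $\lambda\to\infty$ in the resulting chain of inequalities, valid for all $\lambda\ge\lambda_0$. Lemma~\ref{lemma:limite} with $m=N$ and $q=1$ shows the left-hand side converges to $\int_{O(n)}\frac{1}{|UK|^N}\int_{UK}\cdots\int_{UK}\max_{1\le i\le N}|\langle x_i,e_n\rangle|\,dx_N\cdots dx_1\,d\nu(U)$, while Lemma~\ref{lemma:limite} with $m=1$ and $q=p_1$ shows the lowered right-hand side converges to $\int_{O(n)}\bigl(\frac{1}{|UK|}\int_{UK}|\langle x,e_n\rangle|^{p_1}dx\bigr)^{1/p_1}d\nu(U)$. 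This gives the desired inequality with $c_1=c$, $c_1'=\frac{1}{2\log(2C)}$ and the threshold $(2C)^2$. The only genuine obstacle is the varying-exponent issue treated in the second paragraph; everything else is the routine dilation-and-limit argument already carried out for the upper bound in Corollary~\ref{cor:ContinuousUpperBoundMeanWidth}.
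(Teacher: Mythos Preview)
Your proposal is correct and follows essentially the same approach as the paper: set the threshold to $(2C)^2$, take $q=\log N$, use Theorem~\ref{thm:LimitC} to pick $\lambda_0$ with $C_q(\lambda K)\le 2$, apply Theorem~\ref{thm:DiscreteLowerBoundMeanWidth}, replace the $\lambda$-dependent exponent by the fixed $p_1=\frac{\log N}{2\log(2C)}$ via monotonicity of $L_r$-norms, and pass to the limit with Lemma~\ref{lemma:limite}. If anything, you are more explicit than the paper in flagging the varying-exponent issue before invoking the limit lemma.
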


\begin{proof}
Let $c=4C^2$, where $C$ is the constant given by Theorem \ref{thm:DiscreteLowerBoundMeanWidth} and Corollary \ref{cor:DiscreteLowerBoundMeanWidth}. Let $K\subseteq\R^n$ be a convex body with $0\in\operatorname{int}K$ and let $N\geq c=4C^2$. Let us take $q=\log N$. Since $\displaystyle{\lim_{\lambda\to\infty}D_{q}(\lambda K)=1}$, there exists $\lambda_0>0$ such that for every $\lambda\geq\lambda_0$ we have $B_2^n\subseteq \lambda K$ and   $D_{q}(\lambda K)\leq 2$. Thus
$$
(CD_{q}(\lambda K))^2\leq 4C^2\leq N= e^{q}.
$$
Therefore, by Corollary \ref{cor:DiscreteLowerBoundMeanWidth}, there exists an absolute constant $c_1>0$ such that we have that for every $\lambda\geq\lambda_0$,
\begin{eqnarray*}
&&\int_{O(n)}\frac{1}{(G_n(\lambda UK))^N}\sum_{x_1,\dots,x_N\in \lambda UK\cap\Z^n}\max_{1\leq i\leq N}|\langle x_i,e_n\rangle|d\nu(U)\cr
&\geq& c_1\int_{O(n)}\left(\frac{1}{G_n(\lambda UK)}\sum_{x\in\lambda UK}|\langle x,e_n\rangle|^\frac{\log N}{2\log(CD_{q}(K))}dx\right)^\frac{2\log(CD_{q}(K))}{\log N}d\nu(U)\cr
&\geq& c_1\int_{O(n)}\left(\frac{1}{G_n(\lambda UK)}\sum_{x\in\lambda UK}|\langle x,e_n\rangle|^\frac{\log N}{2\log(2C)}\right)^\frac{2\log(2C)}{\log N}d\nu(U)\cr
&=& c_1\int_{O(n)}\left(\frac{1}{G_n(\lambda UK)}\sum_{x\in\lambda UK}|\langle x,e_n\rangle|^{c_1^\prime\log N}\right)^\frac{1}{c_1^\prime\log N}d\nu(U),\cr
\end{eqnarray*}
with $c_1\prime=\frac{1}{2\log(2C)}$. Equivalently,
\begin{eqnarray*}
&&\int_{O(n)}\frac{1}{(G_n(\lambda UK))^N}\sum_{x_1,\dots,x_N\in \lambda UK\cap\Z^n}\max_{1\leq i\leq N}\left|\left\langle \frac{x_i}{\lambda},e_n\right\rangle\right|d\nu(U)\cr
&=& c_1\int_{O(n)}\left(\frac{1}{G_n(\lambda UK)}\sum_{x\in\lambda UK\cap\Z^n}\left|\left\langle \frac{x}{\lambda},e_n\right\rangle\right|^{c_1^\prime\log N}\right)^\frac{1}{c_1^\prime\log N}d\nu(U).\cr
\end{eqnarray*}
Taking limit as $\lambda\to\infty$ and using Lemma \ref{lemma:limite} with $m=N$ and $q=1$ and with $m=1$ and $q=c^\prime\log N$, we obtain
\begin{eqnarray*}
&&\int_{O(n)}\frac{1}{|UK|^N}\int_{UK}\dots\int_{UK}\max_{1\leq i\leq N}|\langle x_i,e_n\rangle|dx_N\dots dx_1d\nu(U)\cr
&\geq& c_1\int_{O(n)}\left(\frac{1}{|UK|}\int_{UK}|\langle x,e_n\rangle|^{c_1^\prime\log N}dx\right)^\frac{1}{c_1^\prime\log N}d\nu(U).
\end{eqnarray*}
\end{proof}

\subsection*{Acknowledgments} We are very grateful to the anonymous referees for their careful reading of the paper and valuable comments, which were very useful to improve the paper.

%\section{Appendix}\label{sec:Appendix}

\end{document}